\definecolor{darkgreen}{rgb}{0,0.5,0}
\newcommand\cyr{%
  \renewcommand\rmdefault{cmr}%
  \renewcommand\sfdefault{wncyss}%
  \renewcommand\encodingdefault{OT2}%
  \normalfont\selectfont}
\DeclareTextFontCommand{\textcyr}{\cyr}
\definecolor{red}{rgb}{0.9,0,0}
\definecolor{purple}{rgb}{0.8,0,0.6}
\numberwithin{equation}{section}
\newtheorem{thm}{Theorem}[section]
\newtheorem{prop}[thm]{Proposition}
\newtheorem{ass}[thm]{Assumption}
\newtheorem{lemma}[thm]{Lemma}
\newtheorem{cor}[thm]{Corollary}
\theoremstyle{definition}
\newtheorem{defn}[thm]{Definition}
\theoremstyle{remark}
\newtheorem{algo}[thm]{Algorithm}
\newtheorem{rk}[thm]{Remark}
\newtheorem{ex}[thm]{Example}
\newcommand\Q{\mathbb{Q}}
\newcommand\F{\mathbb{F}}
\newcommand\A{\mathbb{A}}
\newcommand\Z{\mathbb{Z}}
\newcommand\R{\mathbb{R}}
\newcommand\Gal{\mathop{\rm Gal}\nolimits}
\newcommand\Sym{\mathop{\rm Sym}\nolimits}
\newcommand\supp{\mathop{\rm supp}\nolimits}
\newcommand\tors{\mathop{\rm tors}\nolimits}
\newcommand{\Div}{\operatorname{Div}}
\newcommand{\Pic}{\operatorname{Pic}}
\renewcommand{\div}{\operatorname{div}}
\newcommand{\BP}{{\mathbb P}}
\newcommand{\eps}{\varepsilon}
\newcommand{\JQt}{J(\mathbb{Q})_{\tors}}
\newcommand{\AQt}{A(\mathbb{Q})_{\tors}}
\begin{document}

\title[Torsion subgroups of hyperelliptic genus-3 Jacobians]{Computing torsion subgroups of Jacobians of hyperelliptic curves of genus~3}

\author{J. Steffen M\"uller}
\author{Berno Reitsma}
%\email{reitsmaberno@gmail.com}
\address{
  Bernoulli Institute, 
  University of Groningen,
  Nijenborgh 9,
  9747 AG Groningen,
  The Netherlands
}

\date{\today}

%=========================================================================

\begin{abstract} \setlength{\parskip}{1ex} \setlength{\parindent}{0mm}
We introduce an algorithm to compute the structure of the rational torsion subgroup of the Jacobian of a
  hyperelliptic curve of genus~3 over the rationals. We apply a {\tt Magma} implementation
  of our algorithm to a database of curves with low discriminant due to Sutherland as well as a
  list of curves with small coefficients. In the process, we find several torsion
  structures not previously described in the literature.
The algorithm is a generalisation of an algorithm for genus 2 due to Stoll, which we
  extend to  
  abelian varieties satisfying certain conditions. The idea is to compute
  $p$-adic torsion lifts of points
  over finite fields using the Kummer variety and to check whether
  they are rational using  heights. Both  have been made explicit for
  Jacobians of hyperelliptic curves of genus~3 by Stoll.
This article is partially based on the second-named author's Master thesis.
\end{abstract}

\maketitle

%=========================================================================

\section{Introduction}\label{intro}
For an abelian variety $A/\Q$,
the torsion subgroup 
$A(\mathbb{Q})_{\tors}$ of the group $A(\Q)$ of $\Q$-rational points on $A$ is finite.
If $A=E$ is an elliptic curve, it is easy to compute
$E(\mathbb{Q})_{\tors}$, 
and for Jacobians of genus~2 curves, there is a $p$-adic algorithm due to
Stoll (see~\cite[Section~11]{StollG2}). In the present paper, we give a theoretical
extension of Stoll's algorithm to arbitrary abelian varieties $A/\Q$. We then make this
extension practical for Jacobians of hyperelliptic curves of genus~3. The latter heavily
uses explicit arithmetic on the Kummer variety of such a Jacobian, also due to
Stoll~\cite{StollG3}.

Our main motivation comes from a database of hyperelliptic curves of genus~3 
due to Andrew Sutherland~\cite{sutherlanddatabase}. Similar to databases of elliptic
curves and curves of genus~2 in the {\tt LMFDB}~\cite{LMFDB}, it would be
useful to compute the most important arithmetic invariants
of these curves, including the structure of the subgroup of rational
torsion points on its Jacobian.
Sutherland asked for an algorithm to accomplish this in~2017.
We have used our algorithm to compute the torsion subgroups of all curves
in the
database, see~\S\ref{subsec:suth}.

In this computation we found several torsion structures that were not previously known in
the literature. Recall that for elliptic curves over $\Q$, Mazur's Theorem gives a complete list of all  
torsion subgroups up to isomorphism. For 
dimension $d>1$, it is not
even known whether there is a uniform bound on the size of all 
rational torsion subgroups of abelian varieties over $\Q$ of dimension $d$. A lot of work
has gone into constructing Jacobians of genus~2 curves with large torsion orders (see for
instance~\cite{How15} and the references therein).
Some constructions of rational torsion points of large order on Jacobians of hyperelliptic genus~3 curves can be
found in~\cite{kronbergPhD}, \cite{JacobianDescent}, \cite{SplitJacobians} and
in~\cite{Fly91, Lep97}, where families of Jacobians with large rational torsion are
constructed that contain hyperelliptic genus~3 examples. A list of orders of rational
torsion points for such curves known in the literature can be found in~\cite[Table~3.2]{JacobianDescent}. However,
much less is known than for genus~2. Therefore it is
interesting to investigate which abelian groups actually occur. 
Inspired by a search by Howe for $g=2$~\cite{How15}, we ran through a list of certain
hyperelliptic genus~3 curves
with small coefficients, and we found many new torsion structures in this
way, see~\S\ref{subsec:search}.
We obtain
the following list of all torsion structures that are currently known to
occur.
\begin{thm}\label{T:main}
Every abelian group of order $\le 44$ is isomorphic to 
the group of rational torsion
points on 
  a geometrically simple Jacobian of  a hyperelliptic curve over $\Q$ of genus~3, {with
    the possible exception of} the groups
with invariant factors
$
    [3,3,3], [3,9], [2,4,4], [6,6].
$
In addition, the abelian groups with the following invariant factors are isomorphic to 
the group of rational torsion
points on a geometrically simple Jacobian of a hyperelliptic curve over $\Q$ of genus~3:
    \begin{align*}
      & [46], [2,2,2,6], [2,2,12], [2,24], [4,12], 
      [48], [49], [50], [51], [2,26], [52], [3,18], [54], [2,2,14], [2,28], [56],
      [58],\\&[2,30], [63],[2,2,2,2,2,2],[2,2,2,2,4], [2,2,2,8],[2,4,8], [2,32], [64], [65], [70], [6,12], [72],
      [2,2,2,10], [2,2,20],\\& [2,42], [2,44], [91],[2,2,28],[2,52],
 [2,2,2,2,10], 
    \end{align*}
\end{thm}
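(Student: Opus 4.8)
The statement is a realisation (existence) result, so the plan is purely constructive: for every abelian group $G$ named in the theorem it suffices to exhibit one explicit hyperelliptic curve $C/\Q$ of genus~$3$ together with proofs that (i) its Jacobian $J$ is geometrically simple, and (ii) $\JQt \cong G$. Nothing has to be shown for the four excepted invariant-factor types $[3,3,3],[3,9],[2,4,4],[6,6]$, since they are only listed as \emph{possible} exceptions. I would begin by enumerating the finitely many isomorphism classes of abelian groups of order at most~$44$, organised by order and by invariant factors, and fixing the finite list of larger invariant-factor types named in the statement; proving the theorem then reduces to producing one valid witness for each entry of this finite list (and checking, by inspection of the experimental data below, that no group of order $\le 44$ outside the four exceptions has been overlooked).

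The witnesses come from the experimental part of the paper. I would run the {\tt Magma} implementation of our algorithm on Sutherland's low-discriminant database~\cite{sutherlanddatabase} (cf.\ \S\ref{subsec:suth}) and on the family of genus-$3$ hyperelliptic curves with small coefficients considered in \S\ref{subsec:search} in the spirit of Howe's genus-$2$ search~\cite{How15}, recording for each curve the computed structure of $\JQt$. This yields a finite table of (curve, torsion-structure) pairs, from which one reads off a curve realising each required group; for a handful of the larger structures one may instead take curves from the constructions in the literature. Step~(ii) for each individual witness is exactly the output guarantee of our algorithm, and is therefore discharged by the correctness proof given earlier in the paper; I stress that this verification is unconditional, using good-reduction injections $\JQt \hookrightarrow J(\F_\ell)$ at several primes~$\ell$ to bound the group, $p$-adic lifting on the Kummer variety to produce candidate torsion points, and Stoll's explicit height bounds to decide their rationality.

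The remaining and most delicate ingredient is~(i), certifying geometric simplicity of each witness~$J$. The main tool is reduction at a prime~$\ell$ of good reduction: the characteristic polynomial $P_\ell(T)\in\Z[T]$ of Frobenius on $J_{\F_\ell}$ is a degree-$6$ Weil polynomial, and if the abelian threefold $J_{\F_\ell}$ is \emph{absolutely} simple over $\F_\ell$ then $J$ is geometrically simple over $\Q$; indeed, a geometric isogeny decomposition of $J$ would be defined over some number field $L$, and by the Néron--Ogg--Shafarevich criterion good reduction of $J$ at $\ell$ forces good reduction of the factors at the primes of $L$ above $\ell$, so the decomposition would reduce to one of $J_{\overline{\F}_\ell}$. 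Absolute simplicity of $J_{\F_\ell}$ is decidable via Honda--Tate theory: one checks that $P_\ell$ is irreducible and that the Weil number $\pi$ (the image of $T$ in $K=\Q[T]/(P_\ell)$) satisfies $\Q(\pi^n)=K$ for all $n\ge 1$ up to the relevant bound. So for each witness I would search over small good primes~$\ell$ for one with $J_{\F_\ell}$ absolutely simple; this succeeds for geometrically simple $J$ at, for instance, ordinary primes with irreducible Frobenius polynomial. This is the step I expect to cause the most work, in particular for curves taken from the literature, whose Jacobians are in some references only known to be simple rather than geometrically simple; when no convenient prime is found, the fallback is to compute the geometric endomorphism algebra $\operatorname{End}(J_{\overline{\Q}})\otimes\Q$ directly (complex-analytically, with an algebraic certificate) and to verify that it contains no idempotents other than $0$ and~$1$.
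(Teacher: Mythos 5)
Your proposal takes essentially the same route as the paper: a constructive, computational proof that reads off witness curves for each required group from the tables produced by running the algorithm on Sutherland's database and the small-coefficient search, invokes the algorithm's correctness theorem to certify each $\JQt$, and certifies geometric simplicity by exhibiting a good prime at which the reduction is absolutely simple (the paper cites Howe--Zhu, which is the Honda--Tate criterion you describe). One small inaccuracy worth flagging: the four structures missing from both searches, namely $(\Z/2\Z)^5$, $(\Z/2\Z)^6$, $(\Z/2\Z)^4\times\Z/4\Z$ and $(\Z/2\Z)^3\times\Z/6\Z$, are neither particularly large nor available from the literature; the paper constructs explicit new curves $X_1,\dots,X_4$ for them, with $X_1,X_2$ obtained via the rational two-torsion description of \S\ref{g3twotorsion} and $X_3,X_4$ from a further targeted search.
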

All torsion structures in Theorem~\ref{T:main} came up in our search or in Sutherland's
database, except for the groups {$(\Z/2\Z)^5,$ $(\Z/2\Z)^6, (\Z/2\Z)^4\times \Z/4\Z$
and $(\Z/2\Z)^3\times \Z/6\Z$, which we constructed.
We do not claim that the groups listed as exceptions in
Theorem~\ref{T:main} do not occur; we simply did not find such examples in our computations
or the literature.}
Using our computations we found examples for all torsion structures that
appeared in the literature prior to our work; 
in particular, we found new examples for the largest known prime group order~43
and the largest known
point order~91, both exhibited by Nicholls~\cite{JacobianDescent}. The group
$(\Z/2\Z)^4\times \Z/10\Z$ is the largest group of rational torsion points
on a geometrically simple Jacobian
of a hyperelliptic genus~3 curve found so far; no such group of size $>91$ was previously known.

\begin{rk}\label{R:}
  We focused on geometrically simple examples.
  More generally, we have found, for every abelian group $A$ of order $<45$
  except for the groups with invariant factors $[3,3,3]$ and $[3,9]$, 
  a Jacobian of a hyperelliptic curve over $\Q$ of genus~3 with group of
  rational torsion points isomorphic to $A$.
  We expect that many additional structures
  can be found by systematically gluing abelian varieties of lower dimension, for instance using the methods of~\cite{HSS20}. 
\end{rk}
%%%%%%%%%%%%%%%%%%%%%%%%%%%%%%%%%%%%%%%%%%%%%%%%%%%%%%%%%%%%%%%%%%%%%%%

There are other possible applications of our algorithm: The order of the rational torsion subgroup appears in the strong version of
the conjecture of Birch and Swinnerton-Dyer, and we therefore need an algorithm to compute
this quantity to gather empirical evidence for the conjecture. Finally, if
$J$ is the Jacobian of a smooth projective curve $X/\Q$ with $\mathrm{rk}J(\Q)=0$, and we have an Abel-Jacobi embedding $j\colon X\to J$ defined over $\Q$, then we
can compute the set $X(\Q)$ by finding $J(\Q) = \JQt$ and checking which
points $P\in \JQt$
have a rational preimage under $j$.

\subsection{Upper bounds using reduction}\label{subsec:red}
Let $A/\Q$ be an abelian variety.
An upper bound on the order of $\AQt$ can be computed easily as follows: 
For a prime $p$ of good reduction for $A$ and an integer $m$ (which we
require to be odd if $p=2$), the restriction of the reduction map 
  \begin{equation*}
  \rho_p\colon
    A(\Q_p)\to \tilde{A}(\F_p)
  \end{equation*}
to $A(\mathbb{Q}_p)[m]$ is injective, where $\tilde{A}/\F_p$ is the reduction of $A$
modulo $p$ (see \cite[Theorem C.1.4]{DiophantineGeometry}). We choose a set
$S$ containing a few small odd primes of good reduction and
compute $\#\tilde{A}(\F_p)$ for all $p \in S$; then 
\begin{equation*}
\#\AQt\mid \gcd_{p\in S} \#\tilde{A}(\F_p)\,.
\end{equation*}

We can obtain more information from the structure of $\tilde{A}(\F_p)$
rather than only its order. 
\begin{ex}\label{extors3}
Consider the Jacobian $J$ of
\[
  X \colon y^2 = x^8 + 2x^7 + 3x^6 + 4x^5 + 9x^4 + 8x^3 + 7x^2 + 2x + 1\equalscolon f(x)\,.
\]
The primes of bad reduction for
$X$ are $2$, $3$ and $13177$. We find
$\#\tilde{J}(\mathbb{F}_5) = 180$, $\#\tilde{J}(\mathbb{F}_7) = 666$, so that $\#\JQt\mid
18$. A closer inspection shows
  \[
\tilde{J}(\mathbb{F}_5) \cong \mathbb{Z}/3\mathbb{Z} \times
  \mathbb{Z}/60\mathbb{Z}\,;\quad 
\tilde{J}(\mathbb{F}_7) \cong \mathbb{Z}/666\mathbb{Z}\,.
\]
  We conclude that $J(\mathbb{Q})_{\tors}$
is isomorphic to a subgroup of $\mathbb{Z}/6\mathbb{Z}$. 
  We will see in Example~\ref{2torsextors3} that $\#J(\Q)[2] =2$. 
To find $\#\JQt$, it remains to check whether there is a rational
  point of order~3. Searching among small rational points on $X$, we find that 
$[(0,-1) - \infty_1]$ has this property, where $\infty_1$ is the point with coordinates
  $(0,1)$ on the model 
  $$w^2 = 1+2z +3z^2+4z^3+9z^4+8z^5+7z^6+2z^7+z^8\,.$$
\end{ex}
Most of the time, the upper bound that we get from considering the structure of
$\tilde{A}(\F_p)$ for a reasonable number of primes $p$ of good reduction
is actually equal to the correct order. For
instance, in the database~\cite{sutherlanddatabase}, we found this to be the case 
for more than  $97\%$ of all Jacobians, where we used all good primes
below~1000. 
For the remaining
ones, the quotient is a small power of~2 in the vast majority
of cases.  See~\S\ref{subsec:suth} for more details. 

Example~\ref{extors3} has the convenient property that $X$ has a rational point, which allows us to
add points in $J(\Q)$. The computer algebra system {\tt Magma}~\cite{BCP97} 
contains an algorithm to compute the group law in $J(k)$ for the Jacobian of a
hyperelliptic curve of odd genus over a field $k$ if a $k$-rational point
on the curve is known; alternatively, one may use
   Sutherland's (more efficient) balanced divisor approach~\cite{sutherland}.

Now consider the following example, brought to our attention by Andrew Sutherland.
\begin{ex}\label{sutherland13}
Let
$X/\Q$ be the hyperelliptic curve defined by
\[
y^2 = 5x^8 - 14x^7 + 33x^6 - 36x^5 + 30x^4 + 2x^3 - 16x^2 + 20x - 7.
\]
with Jacobian $J/\Q$.
  There seems to be a point of order~13 in $\tilde{J}(\F_p)$ for all good primes $p$. Is
  there a global point of order~13? 
  The curve $X$ does not seem to have any rational points, so 
  arithmetic in
  $J(\Q)$ is not implemented. In any case, there are no obvious nontrivial points in $J(\Q)$.
  We will show in  Example~\ref{newrationalpoint} that $\JQt\cong \Z/13\Z$.
\end{ex}

Our method for computing $J(\Q)_{\mathrm{tors}}$ follows an approach due to
Stoll for dimension~2~\cite[Section~11]{StollG2}, and works as follows: We lift points of order
$m$ coprime to $p$ to $A(\Q_p)[m]$ and then check whether the lift is rational. To do so,
one potential approach is to represent points in $A(\Q_p)$ using a projective embedding
of $A$.
This, however, is much too complicated in practice, since in general one would have to work
in $\mathbb{P}^{4^g-1}$ and no explicit projective embedding is known for $g>2$.
Instead, {we follow Stoll in using} the Kummer variety of $A$. {This is practical
for Jacobians of hyperelliptic curves of genus~3, since the required explicit theory of
the Kummer variety and of heights was developed by Stoll in~\cite{StollG3}.}

\begin{rk}\label{R:raymond}
  In recent work~\cite{Bom23}, van Bommel has given an algorithm to compute the torsion
  subgroup for Jacobians of non-hyperelliptic curves of genus~3. His
  algorithm does not use the Kummer variety or height bounds.
\end{rk}

%%%%%%%%%%%%%%%%%%%%%%%%%%%%%%%%%%%%%%%%%%%%%%%%%%%%%%%%%%%%%%%%%%%%%%%%
\subsection{Outline}\label{subsec:outline}
We gather
preliminaries on Kummer varieties and heights on abelian varieties in
Section~\ref{S:kum-heights}. In Section~\ref{algorithm} we generalise Stoll's algorithm
for the computation of $\JQt$ when $J$ is the Jacobian of a genus 2 curve to abelian
varieties $A/\Q$ that satisfy Assumption~\ref{assump}.
Then we show that this assumption is satisfied for Jacobians of hyperelliptic curves of
genus~3 in Section~\ref{genus3}. Finally, we discuss our computations  in
Section~\ref{examples}.

%%%%%%%%%%%%%%%%%%%%%%%%%%%%%%%%%%%%%%%%%%%%%%%%%%%%%%%%%%%%%%%%%%%%%%%%
\subsection{Acknowledgements}\label{subsec:ack}
It is a pleasure to thank Andrew Sutherland for providing the motivation for this work and for helpful
discussions, and Michael Stoll for answering many questions and for useful
suggestions, in particular Lemma~\ref{L:deg8tors2}.
We thank Ludwig F\"urst, Timo Keller and especially Michael Stoll for many comments on 
preliminary versions of this article,  Max Kronberg for explaining results
from his thesis, and
Jaap Top and P\i{}nar K\i{}l\i{}\c{c}er for helpful discussions.
We would also like to thank two anonymous referees for careful reports with
many useful suggestions for improvement.
We are grateful to the Artificial Intelligence Group at the Bernoulli
Institute of the University of Groningen for providing access to the {\tt
Pallas}-server, which we used for our computations. 
The first author was supported by NWO Grant VI.Vidi.192.106.

%%%%%%%%%%%%%%%%%%%%%%%%%%%%%%%%%%%%%%%%%%%%%%%%%%%%%%%%%%%%%%%%%%%%%%%%
\section{Kummer varieties and heights}\label{S:kum-heights}
If $A/k$ is an abelian variety of dimension $g>0$ over a field $k$, then the
\emph{Kummer variety} $K/k$ of $A$ is defined as $K \colonequals
A/\langle-1\rangle$. 
The quotient map is $2:1$ except at points of order $2$ in $A$, where it is
        injective. The images of these points are the singular points of $K$.
        By~\cite[Theorem~4.8.1]{CAV}, 
        $K$ can be embedded into $\mathbb{P}^{2^g-1}$.
	We fix a rational map
        \begin{equation}\label{kappa}
		\kappa \colon A \rightarrow \mathbb{P}^{2^g-1} 
        \end{equation}
        such that the image $\kappa(A)$ is a birational model for $K$.

Since $\kappa$ identifies inverses on $A$, the group structure
is lost, but scalar multiplication $[n]\colon A\to A$ descends, since it commutes with
inversion. In fact, there is a rational map $[[n]]\colon K\to K$ such that 
\begin{equation*}
\centering
\begin{tikzcd}
A \arrow[r, "\lbrack n \rbrack"] \arrow[d, "\kappa"]& A \arrow[d, "\kappa"] \\
K \arrow[r, "\lbrack \lbrack n \rbrack \rbrack"]& K
\end{tikzcd}
\end{equation*}
commutes. Furthermore, there is a rational map $B\colon \Sym^2(K)\to \Sym^2(K)$ which, for
$Q_1,Q_2\in A$, sends the unordered pair $\{\kappa(Q_1),\kappa(Q_2)\}$
to the unordered pair $\{\kappa(Q_1 + Q_2), \kappa(Q_1 - Q_2)\}$.
%Hence, if we already know $\kappa(Q_1-Q_2)$, then the map $B$ allows us to determine $\kappa(Q_1+Q_2)$.
 We suppose that algorithms for the following tasks are available:
 \begin{itemize}
  \item \texttt{Double}: Given $\kappa(Q)$ for $Q \in A(k)$, return 
    $[[2]](\kappa(Q)) = \kappa(2Q)$.
\item \texttt{PseudoAdd}: Given $\kappa(Q_1), \kappa(Q_2), \kappa(Q_1 - Q_2)$ for
  $Q_1, Q_2 \in A(k)$, return $\kappa(Q_1 + Q_2)$.
\end{itemize}
This leads to the following double-and-add algorithm to compute $[[n]](R)$
for $n\in \Z\setminus\{0\}$ and $R\in K$.
\begin{algo}\label{kummerarithmetic} \textbf{Multiplication-by-$n$ on the Kummer}\\
  Input: $R \in K(k)$, $n \in \mathbb{Z}$\\
  Output: $[[n]](R)$
  \begin{enumerate}
  \item Set $\boldsymbol{x} \colonequals \kappa(0)$, $\boldsymbol{y}
    \colonequals R, \boldsymbol{z} \colonequals R$ and $m \colonequals |n|$.
\item While $m \neq 0$, repeat the following steps.
  \begin{enumerate}
    \item If $m$ is odd, then set $\boldsymbol{x} \colonequals \texttt{PseudoAdd}(\boldsymbol{x}, \boldsymbol{z}, \boldsymbol{y}).$
    Else, set $\boldsymbol{y} \colonequals \texttt{PseudoAdd}(\boldsymbol{y}, \boldsymbol{z}, \boldsymbol{x})$.
    \item Set $\boldsymbol{z} \colonequals \texttt{Double}(\boldsymbol{z})$.
    \item Set $m \colonequals \left\lfloor\frac{m}{2}\right\rfloor$.
	\end{enumerate}
      \item Return $\boldsymbol{x} \colonequals [[m]](R).$
\end{enumerate}
\end{algo}
Algorithm~\ref{kummerarithmetic} is a generalisation of 
the Montgomery ladder for elliptic curves; the genus 2 case is discussed
in~\cite{FlynnSmart}.

Now suppose that $k=\Q$. Then we can use the map
$\kappa$  to define heights on
$A(\Q)$ as follows.
The \emph{naive height} $h\colon A(\Q)\to \R_{\ge 0}$ is the function $h\colonequals \log
(H\circ \kappa)$, where $H \colon \mathbb{P}^{2^g-1}(\Q) \rightarrow \mathbb{R}_{\geq 0}$ is the
usual height given by 
 mapping $P= (x_1:\ldots:x_{2^g})$ to $\max(|x_1|, \ldots, |x_{2^g}|)$, where
 $x_1,\ldots,x_{2^g}$ are coprime integers.
 The map $h$ is quadratic up to a bounded function, hence the
\emph{canonical height}
is well-defined:
\[
\hat{h}(Q) \colonequals \lim_{n \rightarrow \infty} \frac{h(nQ)}{n^2}
\]
\begin{thm}\label{nerontate}
  \textbf{(Néron–Tate)} {\cite[Theorem~B.5.1]{DiophantineGeometry}}
The following properties are satisfied.
\begin{enumerate}
  \item $\hat{h}(nQ) = n^2\hat{h}(Q)$ for all $n \in \mathbb{Z}$ and $Q\in A(\Q)$.
  \item  For $Q\in A(\Q)$, we have $\hat{h}(Q) = 0$ if and only if $Q \in A(\mathbb{Q})_{\tors}$.
	\item The set $\{Q \in A(\mathbb{Q}) : \hat{h}(Q) \leq B\}$ is
          finite for every constant $B \geq 0$.
	\item The height difference $|\hat{h} - h|$ is bounded.
\end{enumerate}
\end{thm}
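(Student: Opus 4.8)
The plan is to deduce all four statements from the hypothesis that $h$ is quadratic up to a bounded function, together with Northcott's finiteness theorem, via Tate's telescoping argument. First I would make the hypothesis precise: there is a constant $C$ with
\[
  |h(Q_1+Q_2)+h(Q_1-Q_2)-2h(Q_1)-2h(Q_2)| \le C \quad\text{for all } Q_1,Q_2\in A(\Q),
\]
and moreover $h(-Q)=h(Q)$, since $\kappa$ factors through $A/\langle-1\rangle$. Taking $Q_1=Q_2=Q$ and absorbing the fixed value $h(0)$ into the constant yields $|h(2Q)-4h(Q)|\le C'$. Hence the sequence $a_n\colonequals h(2^nQ)/4^n$ satisfies $|a_{n+1}-a_n|\le C'/4^{n+1}$, so it is Cauchy and converges; summing the geometric tail gives $|\lim_n a_n-h(Q)|\le C'/3$. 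A routine estimate, again using the displayed inequality, shows that $h(nQ)/n^2$ has the same limit as $n\to\infty$, so $\hat h$ is well defined by either formula; in particular $|\hat h-h|$ is bounded, which is property~(4). The same limiting process makes the bounded error on the right of the displayed inequality disappear after division by $n^2$, so $\hat h$ is an even function satisfying the parallelogram law \emph{exactly}.

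For property~(1): an even function on an abelian group satisfying the parallelogram law is a quadratic form, and a quadratic form $q$ satisfies $q(nx)=n^2q(x)$ for all $n\in\Z$; applying this to $\hat h$ gives $\hat h(nQ)=n^2\hat h(Q)$. Equivalently, one computes directly $\hat h(nQ)=\lim_m h((2^mn)Q)/4^m = n^2\lim_m h((2^mn)Q)/(2^mn)^2 = n^2\hat h(Q)$, using that $(2^mn)_m\to\infty$.

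For property~(3), I would invoke Northcott's theorem, which states that for every $C$ the set $\{P\in\mathbb{P}^{2^g-1}(\Q):H(P)\le C\}$ is finite. Since $h=\log(H\circ\kappa)$ and $|\hat h-h|$ is bounded by some $C_0$, the set $\{Q\in A(\Q):\hat h(Q)\le B\}$ is contained in $\kappa^{-1}\bigl(\{P:H(P)\le e^{B+C_0}\}\bigr)$, which is finite because $\kappa$ is generically $2:1$ with finite fibres, the finitely many points in the indeterminacy locus of $\kappa$ being harmless as they form a finite set. Finally, for property~(2): if $Q\in\AQt$ then $\{nQ:n\in\Z\}$ is finite, so $\{h(nQ)\}_n$ is bounded and $\hat h(Q)=\lim_n h(2^nQ)/4^n=0$; conversely, if $\hat h(Q)=0$ then $\hat h(nQ)=n^2\hat h(Q)=0$ for all $n$ by~(1), so the cyclic subgroup generated by $Q$ is contained in the finite set $\{P\in A(\Q):\hat h(P)\le 0\}$ from~(3), whence $Q$ has finite order.

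The main obstacle is the passage from the dyadic limit to the general limit $h(nQ)/n^2$ and the verification that $\hat h$ is exactly quadratic; this is precisely where the quantitative form of the ``quadratic up to a bounded function'' hypothesis is used, and it also underlies property~(4). A secondary point requiring care is the Northcott-type finiteness through the rational map $\kappa$: one must handle its indeterminacy locus and its $2:1$ nature, though neither affects finiteness. All of this is classical and contained in the cited reference~\cite[Theorem~B.5.1]{DiophantineGeometry}; the input specific to our setting is only the existence of a map $\kappa$ as in~\eqref{kappa}, and hence of the naive height $h$, together with its near-quadraticity, as recorded above.
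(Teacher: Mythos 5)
The paper does not give a proof of this theorem: it simply cites it as \cite[Theorem~B.5.1]{DiophantineGeometry}. Your proposal reconstructs the standard Tate telescoping argument that underlies that reference, and your overall structure (dyadic limit, bounded height difference, passage to the exact parallelogram law, Northcott finiteness, and the deduction of (2) from (1) and (3)) is sound and matches the classical proof.

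A few small inaccuracies are worth flagging, though none is fatal. First, when you extract the exact parallelogram law for $\hat h$, the division should be by $4^n$ after applying the near-parallelogram inequality at $2^n Q_1, 2^n Q_2$, not by $n^2$; you then send $n\to\infty$. Second, your ``direct computation'' of (1), namely $\hat h(nQ)=\lim_m h((2^m n)Q)/4^m = n^2\lim_m h((2^m n)Q)/(2^m n)^2 = n^2\hat h(Q)$, is circular as written: the last equality identifies a limit along the subsequence $(2^m n)_m$ of $k\mapsto h(kQ)/k^2$ with $\hat h(Q)$, which is precisely what you are trying to establish. The clean route is the one you also mention: show $\hat h(2P)=4\hat h(P)$ directly from the dyadic limit, deduce the exact parallelogram law by the limiting process, conclude that $\hat h$ is a quadratic form, obtain $\hat h(nQ)=n^2\hat h(Q)$, and only then infer that $h(nQ)/n^2\to\hat h(Q)$ from the bound $|h(nQ)-\hat h(nQ)|\le C'/3$. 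Third, the remark that the indeterminacy locus of $\kappa$ ``forms a finite set'' is not correct in general (it can be positive-dimensional), but this is harmless here: for a symmetric theta divisor the linear system $|2\Theta|$ is base-point free, so $\kappa$ is in fact a morphism, and in any case the theorem presupposes that $h=\log(H\circ\kappa)$ is defined on all of $A(\Q)$. With these small repairs your argument is the standard one.
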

By Theorem~\ref{nerontate}(2), torsion points have small naive height. More
precisely, suppose that $\beta \in \R_{\ge 0}$ satisfies
\begin{equation*}
|\hat{h}(Q) - h(Q)| < \beta
\end{equation*}
for all $Q \in A(\mathbb{Q})$. We call $\beta$ a~\textit{height difference bound}. 
\begin{cor}\label{C:}
  Let $Q\in A(\Q)_{\tors}$. Then $H(Q)< e^\beta$.
\end{cor}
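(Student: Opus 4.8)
The plan is to deduce this immediately from the definition of the height difference bound $\beta$ together with Theorem~\ref{nerontate}(2). First I would take $Q \in A(\Q)_{\tors}$; by Theorem~\ref{nerontate}(2) we have $\hat{h}(Q) = 0$, since torsion points are exactly the points of canonical height zero. Then I would invoke the defining property of $\beta$, namely that $|\hat{h}(Q) - h(Q)| < \beta$ holds for all $Q \in A(\Q)$, and in particular for our torsion point $Q$. Substituting $\hat{h}(Q) = 0$ gives $|h(Q)| < \beta$, hence $h(Q) < \beta$.

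Finally I would unwind the definition of the naive height: recall $h = \log(H \circ \kappa)$, so $h(Q) = \log H(Q)$ (writing $H(Q)$ for $H(\kappa(Q))$, as the paper does). From $\log H(Q) < \beta$ and monotonicity of the exponential function, we conclude $H(Q) < e^\beta$. The only mild subtlety worth a word is that $H(Q) \geq 1$ always (the coordinates are coprime integers, not all zero), so $\log H(Q) \geq 0$ and the inequality $h(Q) < \beta$ really is the relevant bound rather than its absolute value; but this is automatic. There is no genuine obstacle here — the statement is a direct corollary, and the content lies entirely in Theorem~\ref{nerontate}, which we are free to assume.
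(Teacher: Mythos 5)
Your proof is correct and is exactly the immediate argument the paper intends (it states the corollary without a written proof): combine Theorem~\ref{nerontate}(2) with the defining inequality for $\beta$ and exponentiate. The side remark about $H(Q)\geq 1$ is fine but not needed, since $h(Q)<\beta$ already follows from $|h(Q)|<\beta$.
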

To compute an explicit bound $\beta$, the standard approach 
is to decompose the difference between the naive height and the canonical
height into local components, see for instance~\cite[Theorem~4]{FlynnSmart}.
As we
shall see, $\beta$ will help us decide whether a $p$-adic torsion point is $\Q$-rational or
not.

%%%%%%%%%%%%%%%%%%%%%%%%%%%%%%%%%%%%%%%%%%%%%%%%%%%%%%%%%%%%%%%%%%%%%%%%%%%%%
\section{An algorithm for finding torsion subgroups of abelian varieties}\label{algorithm}
Let $A/\Q$ denote an abelian variety with Kummer variety $K/\Q$ and a fixed
map $\kappa$ as in~\eqref{kappa}.
In this section we discuss an algorithm which computes the group $\AQt$ as an abstract
abelian group, provided Assumption~\ref{assump} below is satisfied. Our algorithm is based
on an algorithm for Jacobians of genus~2 curves due
to Stoll~\cite[Section~11]{StollG2}.

\begin{ass}\label{assump}
We have algorithms for the following:
    \begin{enumerate}
      \item\label{ass:kappa} the map $\kappa \colon A \rightarrow K\subset \mathbb{P}^{2^g-1}$ and
      equations for its image;
    \item\label{ass:lift} deciding whether a given point $R \in K(\mathbb{Q})$ lifts to $A(\Q)$ under
      $\kappa$;
    \item\label{ass:2B} the maps $[[2]]$ and $B$;
    \item\label{ass:beta} a height difference bound $\beta$;
    \item\label{ass:arithred} arithmetic in the group $\tilde{A}(\mathbb{F}_p)$ for primes of good
        reduction $p$ and enumeration of its elements.  
    \end{enumerate}
\end{ass}
The algorithm in~\cite[Section~11]{StollG2} crucially relies on the fact that Assumption~\ref{assump} is satisfied for Jacobians of curves of genus~2, see~\S\ref{genus2}.
We will show in Section~\ref{genus3} that it is also satisfied for Jacobians of
hyperelliptic curves of genus~3.

\begin{rk}\label{R:arithred}
  We can replace~\eqref{ass:arithred} by the assumption that we also have~\eqref{ass:kappa},
  \eqref{ass:lift}
  and~\eqref{ass:2B} for the reduction $\tilde{K}/\F_p$ if $p$ is a prime of good
  reduction. This is the case, for instance, for Jacobians of hyperelliptic curves of
  genus~$\le 3$ (for $g=3$, we need $p>2$).
  We can then enumerate $\tilde{K}(\F_p)$ and check which elements lift to
  $\tilde{A}(\F_p)$ to compute the latter. Moreover, arithmetic in $\tilde{A}(\F_p)$  can
  be reduced to arithmetic in
  $\tilde{K}(\F_p)$, for which we can use~\eqref{ass:2B}. In practice, we prefer to compute
  (in)   $\tilde{A}(\F_p)$ directly.
\end{rk}

The strategy can be summarised as follows. One
first uses reduction modulo $p$ for a number of good primes $p$ to
obtain an integer $d>0$
such that
$\#\AQt\mid d$. For each prime $q\mid d$, we find the $q$-Sylow subgroup of
$\AQt$; to this end, we first choose a suitable good prime $p\ne q$. For each
$\tilde{Q}\in \tilde{A}(\F_p)$ of $q$-power order $m$, we can compute the
unique lift\footnote{{We hope that no confusion arises from using
the word ``lift''
both for Hensel lifts as well as 
lifts of points from $K$ to $A$.}}
$\tilde{\kappa}(\tilde{Q})$ in  {$\kappa(A(\Q_p)[m])$} to any desired precision $p^N$. Using $\beta$, we 
choose $N$ and construct a lattice $L$ with the following property: If there is a point $R\in
\kappa(A(\Q_p)[m])\cap K(\Q)$ that reduces to
our approximation of $\tilde{\kappa}(\tilde{Q})$ modulo $p^N$, then  the shortest nontrivial vector in
$L$ must
be this point $R$. We can decide whether such a point exists by applying
the LLL algorithm. If it does, then it 
remains to check whether it  lifts to $A(\Q)[m]$. See Algorithm~\ref{lifttorsionpoints} for
odd $q$. This is then used in Algorithm~\ref{qpartoftorsion}, which computes the $q$-part of
$A(\Q)_{\tors}$ for odd $q$. The
case $q=2$ is discussed in~\S\ref{subsec:2pow}.
Finally, Algorithm~\ref{torsionsubgroup}  computes $A(\Q)_{\tors}$, provided
Assumption~\ref{assump} is satisfied.

\begin{rk}\label{preimages}
We stress that we do not assume that we can explicitly compute in $A(\Q)$; nor do we assume
that we can explicitly write down points in $A(\Q)$.
  If the latter is possible and if we can compute the preimages under $\kappa$
  in~\eqref{ass:lift}, 
  then we can also find $\AQt$ as a set, see Remark~\ref{R:kummer-to-jac} below.
\end{rk}

%%%%%%%%%%%%%%%%%%%%%%%%%%%%%%%%%%%%%%%%%%%%%%%%%%%%%%%%%%%%%%%%%%%%%%%%%%%%%

\subsection{Checking whether reduced points lift}\label{algorithmliftcheck}
The most challenging part of the algorithm is to check whether
a reduced torsion point lifts to a rational torsion point or not. More
specifically, given a prime $p$ of good 
reduction and a point $\tilde{Q} \in \tilde{A}(\mathbb{F}_p)$ of order
$m$ coprime to $p$, there 
exists a unique lift $Q \in A(\mathbb{Q}_p)[m]$
such that $Q$ reduces to $\tilde{Q}$. This algorithm decides whether
$Q \in A(\mathbb{Q}) \subset A(\mathbb{Q}_p)$. 
Below, we will apply the LLL-algorithm with standard parameter
$\delta=\frac{3}{4}$ (see~\cite{LLLreduction}).

\begin{algo}\label{lifttorsionpoints} \textbf{Lifting Torsion Points}\\
  Input: An abelian variety $A/\Q$ {such that Assumption~\ref{assump} is
  satisfied} and 
    a point $\tilde{Q} \in A(\mathbb{F}_p)$ of order $m > 2$ coprime to $p$.
\\  Output: TRUE if  there is a
  point $Q \in
  A(\mathbb{Q})_{\tors} \subset A(\mathbb{Q}_p)_{\tors}$ that
  reduces to $\tilde{Q}$, else FALSE.

\begin{enumerate}
  \item\label{liftbeta} {Compute a 
    height difference bound $\beta$ for $A$}.
  \item\label{lift1} Choose $M = 1 + am$ such that $p\nmid a$.
  \item\label{liftR0} Let $\tilde{R}_0$ be $\tilde{\kappa}(\tilde{Q})$, considered on
		  an affine patch in
                  $\mathbb{A}^{2^g}(\mathbb{Z}/p\mathbb{Z})$ and
		  normalised such that the first nonzero coordinate 
		  is equal to $1$. Set $r \colonequals 1$, $n \colonequals 0$.
                
                \item\label{liftN} Let $N>1$ such that $p^N > 2^{(2^g + g)}e^{2\beta}$. While $r < N$, repeat the following steps:
                  \begin{enumerate}
          \item\label{liftr} Set $r\colonequals \min\{2r, N\}$.
          \item\label{liftRnt} Let $\tilde{R}_n'$ be any lift of
            $\tilde{R}_n$ to $\mathbb{A}^{2^g}(\mathbb{Z}/p^r\mathbb{Z})$.
          \item\label{liftRn+1} Set $\tilde{R}_{n+1} \colonequals \frac{1}{M -
            1}(M\tilde{R}_n' - [[M]](\tilde{R}_n'))$, where $M\tilde{R}_n'$
				is obtained by multiplying the coordinates of $\tilde{R}_n'$ by $M$.
                              \item\label{liftrn+1} Set $n \colonequals  n + 1$.
	\end{enumerate}
      \item\label{liftRn} Now, consider $\tilde{R}_n \equalscolon (\tilde{r}_1 : \ldots : \tilde{r}_{2^g})$
		  in $K(\mathbb{Z}/p^N\mathbb{Z})$.
		  Let $(r_1, \ldots, r_{2^g}) \in \mathbb{Z}^{2^g}$
		   reduce to $(\tilde{r}_1, \ldots, \tilde{r}_{2^g})$ modulo $p^N$ such
                   that $0 \leq r_i < p^N$ for all $i$. Let
		  $L$ be the lattice generated by $(r_1, \ldots, r_{2^g})$
                  and by $(p^Ne_1, \ldots, p^Ne_{2^g})$, where
                  $(e_1,\ldots,e_{2^g})$ is the standard basis
		  of $\mathbb{Z}^{2^g}$. Let $w$ be the first basis vector
                  of an $\textrm{LLL}$-reduced basis of $L$ and let
		$R=\BP w$ be the corresponding point in $\mathbb{P}^{2^g-1}(\mathbb{Q})$.
              \item\label{liftht} If $R \notin K(\mathbb{Q})$ or $H(R) > e^\beta$, return FALSE.
              \item\label{lift0} If $[[m]](R)\ne \kappa(0)$, return FALSE.
              \item\label{liftlift} If $\kappa^{-1}(R) \subset A(\mathbb{Q})$, return TRUE.
           %and return $\kappa^{-1}(R)$. 
		  Else return FALSE.
\end{enumerate}
\end{algo}
We prove the correctness of the 
algorithm in~\S\ref{algorithmconclusions}.
For Jacobians of curves of genus~2, this is sketched in
Stoll~\cite[Section~11]{StollG2}.
\begin{thm}\label{prooflifttorsionpoints}
  Algorithm \ref{lifttorsionpoints} terminates and returns TRUE if and only if there is a
  point $Q \in
  A(\mathbb{Q})_{\tors} \subset A(\mathbb{Q}_p)_{\tors}$ that
  reduces to $\tilde{Q}$.
\end{thm}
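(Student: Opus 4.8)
The plan is to establish correctness and termination of Algorithm~\ref{lifttorsionpoints} in three stages: first analyse the Hensel-lifting loop in steps~\eqref{liftN}(a)--(d), then show the LLL step~\eqref{liftRn} correctly recovers a rational point when one exists, and finally verify that the height check in step~\eqref{liftht} together with steps~\eqref{lift0}--\eqref{liftlift} correctly certify rationality.

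\textbf{Step 1: the $p$-adic lift converges to $\tilde\kappa$ of the torsion point.} Since $\tilde Q\in\tilde A(\F_p)$ has order $m$ coprime to $p$, reduction injectivity gives a unique $Q\in A(\Q_p)[m]$ with $\rho_p(Q)=\tilde Q$. The iteration $\tilde R_{n+1}\colonequals\frac{1}{M-1}(M\tilde R_n'-[[M]](\tilde R_n'))$ with $M=1+am$, $p\nmid a$, is a Newton-type map: if $R$ is the image $\kappa(Q)$, then because $mQ=0$ we have $[[M]](\kappa(Q))=\kappa(MQ)=\kappa(Q)$, so $\kappa(Q)$ is a fixed point of the affine-coordinate operation $X\mapsto\frac{1}{M-1}(MX-[[M]](X))$. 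I would argue that this operation is a contraction with respect to the $p$-adic metric on the affine patch, with the precision (the exponent in $\Z/p^r\Z$) roughly doubling each pass because $r\colonequals\min\{2r,N\}$ in step~\eqref{liftr}; here one uses that $M-1=am$ is a $p$-adic unit so division by $M-1$ is harmless, and that $[[M]]$ is given by polynomials with $p$-integral coefficients (good reduction, $p$ odd or the $g\le 3$ caveat from Remark~\ref{R:arithred}) so it reduces well modulo powers of $p$. After the loop, $\tilde R_n$ agrees with $\kappa(Q)\bmod p^N$ on the chosen affine patch; the normalisation in step~\eqref{liftR0} (first nonzero coordinate~$=1$) pins down a representative. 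Thus $\tilde R_n$ is a mod-$p^N$ approximation of $\kappa(Q)\in K(\Q_p)$.

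\textbf{Step 2: LLL recovers the rational point if it exists.} Suppose $Q\in A(\Q)_{\tors}$ reduces to $\tilde Q$; then $Q$ is \emph{the} $p$-adic lift, so $\kappa(Q)\in K(\Q)$ and, by Corollary~\ref{C:}, $H(\kappa(Q))<e^\beta$, i.e.\ $\kappa(Q)=(v_1:\cdots:v_{2^g})$ with coprime integers $v_i$ of absolute value $<e^\beta$. The lattice $L$ in step~\eqref{liftRn} consists of all integer vectors congruent mod $p^N$ to a $\Z$-multiple of $(r_1,\dots,r_{2^g})$; since $(r_1,\dots,r_{2^g})\equiv\kappa(Q)\bmod p^N$ after clearing the unit scaling, the vector $(v_1,\dots,v_{2^g})$ lies in $L$. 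Its Euclidean length is at most $2^{g/2}e^{\beta}$. Any nonzero lattice vector \emph{not} proportional to $(v_1,\dots,v_{2^g})$ must, by the structure of $L$ (it contains $p^N\Z^{2^g}$ with index $p^N$, and the $v$-direction is essentially the only ``short'' direction), have length $\ge p^N/(2^{g/2}e^\beta)$ or so; the choice $p^N>2^{2^g+g}e^{2\beta}$ in step~\eqref{liftN} is exactly what forces $2^{g/2}e^\beta < \tfrac12\cdot p^N/(2^{g/2}e^\beta)$, i.e.\ ensures $(v_1,\dots,v_{2^g})$ is strictly shorter than any non-proportional lattice vector by the factor needed for an LLL-reduced first basis vector (which is within $2^{(2^g-1)/2}$ of the minimum) to be proportional to it. Hence $R=\BP w=\kappa(Q)$, it passes both tests in step~\eqref{liftht}, satisfies $[[m]](R)=\kappa(mQ)=\kappa(0)$ in step~\eqref{lift0}, and $\kappa^{-1}(R)\ni Q\in A(\Q)$ in step~\eqref{liftlift}, so the algorithm returns TRUE.

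\textbf{Step 3: no false positives, and termination.} Conversely, if the algorithm returns TRUE, then $R\in K(\Q)$ with $H(R)\le e^\beta$, $[[m]](R)=\kappa(0)$, and $\kappa^{-1}(R)\subset A(\Q)$; pick $Q'\in\kappa^{-1}(R)$. From $[[m]](R)=\kappa(0)$ we get $\kappa(mQ')=\kappa(0)$, hence $mQ'=0$, so $Q'\in A(\Q)_{\tors}\subset A(\Q_p)_{\tors}$. It remains to see $Q'$ reduces to $\tilde Q$: by construction $R\equiv\tilde R_n\equiv\kappa(\text{the }p\text{-adic lift of }\tilde Q)\bmod p^N$ with $p^N$ large, and since $R$ is rational with $H(R)<e^\beta$ the same LLL-uniqueness argument shows $R$ equals $\kappa$ of that lift; since $Q'$ has order dividing $m$ and $\kappa(Q')=R$ reduces to $\tilde\kappa(\tilde Q)$, injectivity of $\rho_p$ on the prime-to-$p$ torsion (applied to $\pm Q'$, using that $\kappa$ only loses the sign) forces $\rho_p(Q')=\pm\tilde Q$; replacing $Q'$ by $-Q'$ if needed gives a point reducing to $\tilde Q$. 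Termination is clear: the loop in step~\eqref{liftN} runs $O(\log N)$ times since $r$ doubles, $N$ is fixed once $\beta$ is computed, and every remaining operation (LLL on a fixed-dimension lattice, the maps $[[m]]$ and $\kappa$, the lift-check) terminates by Assumption~\ref{assump}.

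\textbf{Main obstacle.} The delicate point is Step~2: making precise that $L$ has \emph{no} short vector other than (multiples of) $\kappa(Q)$, so that the explicit threshold $p^N>2^{2^g+g}e^{2\beta}$ genuinely guarantees the LLL-reduced first vector is the sought point. This requires bounding the second successive minimum of $L$ from below --- essentially, if $u,u'\in L$ are short and independent then the $2\times2$ minors of $\binom{u}{u'}$ are divisible by $p^N$ yet bounded by $\approx 2^g e^{2\beta}$, hence vanish, contradicting independence --- and then feeding this into the standard LLL gap bound. One also has to handle the normalisation/affine-patch bookkeeping carefully so that ``reduces to $\tilde R_n$ mod $p^N$'' is an honest statement in $\P^{2^g-1}$, not just on one patch, and confirm that the height-difference bound $\beta$ of step~\eqref{liftbeta} is the same $\beta$ used in the bound for $N$.
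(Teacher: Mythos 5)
Your proposal follows the same three-stage decomposition as the paper's proof: the Hensel/Newton lifting argument (the paper's Proposition~\ref{step3proof} via Lemma~\ref{approx}), the LLL/short-vector uniqueness argument via vanishing of $2\times 2$ minors modulo $p^N$ (the paper's Proposition~\ref{LLLjustif} via Lemmas~\ref{latticeclarification} and~\ref{L:inj}), and the final double implication. The "main obstacle" you flag is exactly what the paper's Lemma~\ref{L:inj} handles, and although your intermediate framing in terms of the second successive minimum is looser than the paper's direct proportionality argument, the core idea and the resulting threshold $p^N>2^{2^g+g}e^{2\beta}$ match.
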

We first need some preliminary results.

\subsubsection{The lifting procedure}\label{liftingprocedureproof}
We start by showing that Step~\eqref{liftN}
of Algorithm \ref{lifttorsionpoints} lifts to the $m$-torsion point that we 
want to approximate.
We say that a point on $K$ is~\emph{$m$-torsion} if the map $[[m]]$
sends it to $\kappa(0)\in K$. Equivalently, a point on $K$ is $m$-torsion
if and only if there is a point in $A[m]$ that maps to it under $\kappa$.

\begin{prop}\label{step3proof}
  After Step~\eqref{liftN} of Algorithm \ref{lifttorsionpoints}, $\tilde{R}_n$ is the 
unique $m$-torsion point in $K(\mathbb{Z}/p^N\mathbb{Z})$ that reduces to $\kappa(\tilde{Q})$.
\end{prop}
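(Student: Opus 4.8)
The plan is to show that the Hensel-type iteration in Step~\eqref{liftN} is precisely Newton iteration for the endomorphism-theoretic identity characterising the $m$-torsion lift, and to track the $p$-adic precision at each stage. First I would set up the arithmetic: since $\gcd(m,p)=1$ and $M=1+am$ with $p\nmid a$, the point $\tilde Q\in\tilde A(\F_p)$ has a unique Teichmüller-type lift $Q\in A(\Q_p)[m]$ reducing to it (by injectivity of $\rho_p$ on prime-to-$p$ torsion, cf.\ \S\ref{subsec:red}), and because $Q$ is $m$-torsion we have $[M]Q = [1+am]Q = Q$, hence on the Kummer variety $\kappa(Q)$ satisfies $[[M]](\kappa(Q)) = \kappa(Q)$. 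The key algebraic observation is that the map $\tilde R \mapsto \frac{1}{M-1}\bigl(M\tilde R - [[M]](\tilde R)\bigr)$ fixes $\kappa(Q)$ exactly (interpreting the subtraction and scaling on a chosen affine representative), so $\kappa(Q)$ is a fixed point of the iteration; what remains is to show the iteration converges $p$-adically to it, quadratically.

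Next I would carry out the convergence argument. The derivative of $[[M]]$ at a point of order dividing $m$ equals multiplication by $M^2 \equiv 1 \pmod{m}$ in the relevant sense, but more to the point, $[[M]]$ reduces modulo $p$ to a map whose differential on the prime-to-$p$ torsion subscheme is an isomorphism, so $\tilde R\mapsto M\tilde R - [[M]](\tilde R)$ has differential $\equiv (M-1)\cdot\mathrm{id}$ at $\kappa(Q)$, and after dividing by $M-1$ (a $p$-adic unit, since $p\nmid a$ and $p\nmid m$ forces $p\nmid am$) the iteration map has differential $\equiv 0$ modulo $p$ at the fixed point. This is the standard Newton/Hensel setup: if $\tilde R_n \equiv \kappa(Q) \pmod{p^{r_n}}$ then $\tilde R_{n+1}\equiv \kappa(Q)\pmod{p^{2r_n}}$, which matches the doubling $r\colonequals \min\{2r,N\}$ in Step~\eqref{liftr}. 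Starting from $\tilde R_0 = \tilde\kappa(\tilde Q) \equiv \kappa(Q)\pmod p$ (Step~\eqref{liftR0}), after finitely many iterations we reach precision $p^N$. I would also note that the choice of affine patch and the normalisation of the first nonzero coordinate to $1$ is compatible with reduction (the first nonzero coordinate of $\kappa(Q)$ is a unit once we are on the correct patch), and that the arbitrary lift $\tilde R_n'$ in Step~\eqref{liftRnt} does not affect the outcome modulo $p^{2r_n}$ because the iteration map depends on $\tilde R_n'$ only through its residue modulo a sufficiently high power of $p$ — here one uses that $M\tilde R_n' - [[M]](\tilde R_n')$ is divisible by $p^{r_n}$ exactly, so dividing by the unit $M-1$ and reducing mod $p^{2r_n}$ is well-defined and independent of the choice.

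Finally, uniqueness: any $m$-torsion point of $K(\Z/p^N\Z)$ reducing to $\kappa(\tilde Q)$ mod $p$ lifts (possibly non-uniquely, via the $2:1$ cover away from $2$-torsion, but the two preimages are $\pm Q'$) to an $m$-torsion point $Q'\in A(\Z/p^N\Z)$ reducing to $\pm\tilde Q$; then $\kappa(Q')$ reduces to $\kappa(\tilde Q)$, and since $\kappa(Q')$ and $\kappa(Q)$ agree modulo $p$ and both are fixed by the (locally contracting) iteration map, an inductive comparison shows they agree modulo $p^N$. Alternatively one invokes the general fact that prime-to-$p$ torsion on an abelian scheme over $\Z/p^N\Z$ injects under reduction mod $p$, applied after lifting from $K$ to $A$. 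I expect the main obstacle to be making the differential computation for $[[M]]$ precise on the singular Kummer variety — one must either work on the affine patch where $\kappa(Q)$ is a smooth point, or lift the whole computation to $A$ (where $[M]$ is étale on prime-to-$p$ torsion) and push down; the cleanest route is probably the latter, observing that $B$ and $[[n]]$ descend from genuinely étale maps on the relevant torsion subschemes of $A$, so the Newton iteration on $K$ is the image of the corresponding (manifestly quadratically convergent) Newton iteration on a projective embedding of $A$.
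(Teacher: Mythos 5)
Your proposal follows essentially the same route as the paper: the paper isolates the key step as Lemma~\ref{approx}, which computes the differential of $[[M]]$ through the local immersion $\kappa$ at the torsion point $Q$ (yielding $\phi([M]Q') - \phi(Q) = M(\phi(Q')-\phi(Q)) + O(\|\phi(Q')-\phi(Q)\|_p^2)$), and then reads off the Newton-type update and its quadratic convergence exactly as you do. One small slip: you assert that $\tilde R\mapsto M\tilde R - [[M]](\tilde R)$ has differential $(M-1)\cdot\mathrm{id}$ at $\kappa(Q)$ and that dividing by $M-1$ gives differential $0$; in fact the differential of $[[M]]$ at $\kappa(Q)$ is $M\cdot\mathrm{id}$ (inherited from $[M]$ on $A$ via the immersion), so $M\tilde R - [[M]](\tilde R)$ already has differential $0$ there (it is the \emph{value} at the fixed point that equals $(M-1)\kappa(Q)$) — the conclusion you reach (iteration map has vanishing differential, hence quadratic convergence) is nevertheless correct and coincides with the paper's argument.
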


In order to prove Proposition~\ref{step3proof}, we first show that
Step~\eqref{liftRn+1} approximates $Q$ by an $m$-torsion lift to the required $p$-adic precision
$p^N$. 
By \cite[III, \textsection 8, Corollary 2]{Bourbaki} and 
\cite{pAdicAbVar},
the group $A(\mathbb{Q}_p)$ is a $p$-adic abelian Lie group
whose topology is the local product topology: a neighborhood
of a point $Q \in A(\mathbb{Q}_p)$ is a neighborhood $U$ of $Q$ contained in an affine
space, and for any $d\ge 1$, the $p$-adic topology on
$\mathbb{A}^d(\mathbb{Q}_p) = \mathbb{Q}_p^d$  is induced
by the maximum norm $ \|\cdot\|_p$.

\begin{lemma}\label{approx}
Let $Q \in A(\mathbb{Q}_p)$ be a torsion point of order $m$, not divisible by $p$.
Let $n\ge 1$, let $\phi \colon A \rightarrow \mathbb{A}^n$ be a rational
  map defined over $\mathbb{Q}_p$ that is differentiable as a map
  $A(\Q_p)\to \mathbb{A}^n(\Q_p)$ and  a $p$-adic immersion near $Q$,
and let $a \in \mathbb{Z}$. If $U \subset A(\mathbb{Q}_p)$ is a
neighborhood of $Q$, then
for any $Q' \in U$, we have
\begin{equation}\label{approxeq}
  \phi([1 + am]Q') - \phi(Q) = (1 + am)(\phi(Q') - \phi(Q)) +
  {O}(\|\phi(Q') - \phi(Q)\|_p^2)\,.
\end{equation}
\end{lemma}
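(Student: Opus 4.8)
The plan is to reduce the statement to a local computation about the differentiable map $\phi$ near the torsion point $Q$, using the key observation that $[1+am]Q = Q$ because $Q$ has order $m$. First I would observe that, since $Q'$ ranges over a neighbourhood $U$ of $Q$, I may shrink $U$ if necessary so that $\phi$ is defined and a $p$-adic immersion on all of $U$, and so that the scalar-multiplication map $[1+am]\colon A(\Q_p)\to A(\Q_p)$ carries $U$ into a fixed affine chart around $\phi(Q)$ (this uses continuity of $[1+am]$ together with $[1+am]Q=Q$). Then both sides of \eqref{approxeq} make sense as elements of $\Q_p^n$, and everything can be phrased in terms of the composite map $\psi\colonequals \phi\circ[1+am]$, which is differentiable near $Q$ with $\psi(Q)=\phi(Q)$.

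The heart of the argument is a $p$-adic Taylor expansion. Writing $t\colonequals Q'-Q$ in the chosen affine coordinates on a chart containing $Q$ (so $\|t\|_p$ is small when $Q'$ is close to $Q$), a first-order Taylor expansion with remainder — valid for differentiable maps between opens in $\Q_p^d$, by the standard $p$-adic mean value estimate (see e.g.\ the references to \cite{Bourbaki} and \cite{pAdicAbVar} invoked just before the lemma) — gives
\[
\phi(Q') - \phi(Q) = D\phi(Q)\,t + O(\|t\|_p^2), \qquad
\psi(Q') - \psi(Q) = D\psi(Q)\,t + O(\|t\|_p^2).
\]
Now I would compute $D\psi(Q)$ by the chain rule: $D\psi(Q) = D\phi([1+am]Q)\circ D[1+am](Q) = D\phi(Q)\circ D[1+am](Q)$, again using $[1+am]Q=Q$. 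Since $[1+am]$ is a group homomorphism, its differential at the fixed point $Q$ is multiplication by the integer $1+am$ on the tangent space (equivalently, one can see this from $[1+am] = \id + [am]$ and the fact that the differential of $[n]$ at a point is multiplication by $n$, which in turn follows from additivity of the tangent map on $A$). Hence $D\psi(Q)\,t = (1+am)\,D\phi(Q)\,t$, and therefore
\[
\psi(Q') - \psi(Q) = (1+am)\,D\phi(Q)\,t + O(\|t\|_p^2)
= (1+am)\bigl(\phi(Q')-\phi(Q)\bigr) + O(\|t\|_p^2),
\]
where in the last step I substituted $D\phi(Q)\,t = \phi(Q')-\phi(Q) + O(\|t\|_p^2)$ and absorbed the error (multiplied by the bounded integer $1+am$, which has $p$-adic absolute value $\le 1$) into the $O$-term. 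Finally I would replace $\|t\|_p = \|Q'-Q\|_p$ by $\|\phi(Q')-\phi(Q)\|_p$: because $\phi$ is a $p$-adic immersion near $Q$, its differential $D\phi(Q)$ is injective with a left inverse that is bounded on the (compact) chart, so $\|t\|_p \ll \|\phi(Q')-\phi(Q)\|_p$ up to a constant, which turns $O(\|t\|_p^2)$ into $O(\|\phi(Q')-\phi(Q)\|_p^2)$ as desired, and $\psi(Q') = \phi([1+am]Q')$ by definition.

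I expect the main obstacle to be the careful bookkeeping of the $p$-adic Taylor remainder and, in particular, justifying the passage from the intrinsic coordinate $t=Q'-Q$ to $\phi(Q')-\phi(Q)$ uniformly in $Q'\in U$: this is exactly where the immersion hypothesis is used, and one has to know that the implied constant in $O(\cdot)$ can be chosen independent of $Q'$ on a fixed small neighbourhood. This is a standard fact about differentiable maps of $p$-adic manifolds (local inversion / constant-rank theorems over $\Q_p$), but it is the one place where some care is needed rather than a formal manipulation; everything else is the chain rule combined with $[1+am]Q=Q$.
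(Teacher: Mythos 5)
Your proposal is correct and follows essentially the same route as the paper: both arguments rest on the facts that $[1+am]Q=Q$, that the differential of $[1+am]$ at $Q$ is scalar multiplication by $1+am$ (via Bourbaki/the Lie group structure), and a first-order $p$-adic Taylor expansion near $Q$. The only cosmetic difference is that the paper expands the induced map $[[M]]$ on the image $\phi(A(\Q_p))$ around $\phi(Q)$, so the error term comes out directly in $\|\phi(Q')-\phi(Q)\|_p$, whereas you expand $\phi$ and $\psi=\phi\circ[1+am]$ around $Q$ in an intrinsic coordinate $t=Q'-Q$ and then use the immersion hypothesis to convert $O(\|t\|_p^2)$ into $O(\|\phi(Q')-\phi(Q)\|_p^2)$; both bookkeeping choices are equivalent.
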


\begin{proof}
  For the proof, we set $M \colonequals 1 + am$, so that $[M](Q) = Q$. 
Near $Q$, the map $\phi$ is an immersion, so there is a well-defined map
  $[[M]]$ that makes the diagram 
\begin{equation}\label{immersionM}
\centering
\begin{tikzcd}
A(\mathbb{Q}_p) \arrow[r, "\lbrack M \rbrack"] \arrow[d, "\phi"]& A(\mathbb{Q}_p) \arrow[d, "\phi"] \\
\phi(A(\mathbb{Q}_p)) \arrow[r, "\lbrack \lbrack M \rbrack \rbrack"]& \phi(A(\mathbb{Q}_p))
\end{tikzcd}
\end{equation}
commute on a neighborhood of $Q$.
  Since $\phi$ is a rational map to $\A^n$, we have that $\phi(A(\mathbb{Q}_p))$ consists of
the $\mathbb{Q}_p$-rational points on an affine variety over $\mathbb{Q}_p$. 
  Hence the differential of  $[[M]]\colon \phi(A)(\Q_p)\to \phi(A)(\Q_p)$ at $\phi(Q)$ is the best
  linear approximation of $[[M]]$ around $\phi(Q)$. 
  In other words, 
it consists of the linear terms of the Taylor expansion of $[[M]]$ 
around $\phi(Q)$.
By \cite[Chapter III, \textsection 2.2]{Bourbaki} the differential of the multiplication-by-$M$-map $[M]$ is
  scalar multiplication on the tangent space, and a computation
  using~\eqref{immersionM} shows that the same holds
  for the differential of $[[M]]$.

Now let $Q'$ be close to
$Q$, so that $\phi(Q')$ is close to
$\phi(Q)$. By the above, we find
  $$[[1 + am]](\phi(Q')) - [[1 + am]](\phi(Q)) = (1 + am) (\phi(Q') - \phi(Q)) + O( \|\phi(Q') - \phi(Q)\|_p^2).$$
Using \eqref{immersionM}, we have
  $[[1 + am]](\phi(Q)) = \phi([1 + am](Q)) = \phi(Q)$. Therefore (\ref{approxeq})
follows.
\end{proof}

We now apply Lemma \ref{approx} to a map $\phi \colon A\to \A^{2^g}$ that
factors through $\kappa \colon A \rightarrow K$.

\begin{proof}[Proof of Proposition \ref{step3proof}]

Since $\kappa$ is differentiable outside $A[2]$, 
composing  with a map that projects onto an affine patch results
  in a differentiable map that is a local immersion outside $A[2]$. Let $\phi$ denote 
the map $\kappa$ composed with the projection onto a suitable affine patch.
Then $\phi$ satisfies the conditions of Lemma 
\ref{approx} and we obtain
  $$[[M]](\tilde{R}_n') - \tilde{R}_{n+1} = M(\tilde{R}_n' - \tilde{R}_{n+1}) + O(\|\tilde{R}_{n+1} - \tilde{R}_n'\|_p^2).$$
  By construction, we have  $\|\tilde{R}_{n+1} - \tilde{R}_n'\|_p^2 =
  p^{-r}$ in Step~\eqref{liftN}
of Algorithm \ref{lifttorsionpoints}, and therefore
\[
  \tilde{R}_{n+1} = \frac{1}{M-1}(M\tilde{R}_n' - [[M]](\tilde{R}_n')) + O(p^r)
\]
is the $m$-torsion point in $K(\mathbb{Z}/p^r\mathbb{Z})$ that reduces to $\kappa(\tilde{Q})$.
\end{proof}
\begin{rk}
Intuitively, one can view $\phi$ as a map that gives local affine coordinates
of $Q$ with the property that we can find a best linear approximation of the
multiplication-by-$(1 + am)$-map.
For the approximation in Step~\eqref{liftRn+1}, one may use a different projection onto $\mathbb{A}^{2^g}$ 
  in every iteration of Step~\eqref{liftN}. This may be necessary if, for example, the first
coordinate of $R$ is divisible by $p^r$, but is not divisible by $p^{2r}$
  for some $r \geq 1$.
\end{rk}

%\begin{rk}
%  For Step~\eqref{liftRnt} we can use (multivariate) Hensel lifting, see~\cite{conradhensel}. Since $R$
%is a smooth point and we know that a lift exists, this is always possible. 
%\end{rk}

\begin{rk}\label{elimbqf1}
In \cite[\textsection 11]{StollG2}, it is assumed that $p$ divides $M$.
Here, we generalise this by allowing $M \not\equiv 1 \mod p$.
One way to use  this additional flexibility in practice is to choose $M$ to be a power of
  $2$, because doubling on $K$ is often faster than applying
  the map $B$. See~\S\ref{avoidbqf}.
  \end{rk}

\subsubsection{Determining a suitable $p$-adic precision}\label{algorithmicheightbound}
We now show  that we can find a $p$-adic precision such that the 
corresponding rational approximation $\tilde{R}_n\in K(\Z/p^N\Z)$ either leads to a rational
lift $R = \kappa(Q)$ such that $Q \in A(\mathbb{Q})$, or no such rational 
lift exists.

\begin{prop}\label{LLLjustif}
  Let $N \in \mathbb{Z}$ be such that $p^N > 2^{(g + 2^g)}e^{2\beta}$. Let $\tilde{R}_n$, $(r_1, \ldots, r_{2^g})$,
  $L$, $w$ and $R$ be as computed in Step~\eqref{liftRn} of Algorithm \ref{lifttorsionpoints}.
Then we have:
\begin{enumerate}
  \item[(a)] If $H(R) \leq e^\beta$, then $R$ is the unique point in
    $\mathbb{P}^{2^g-1}(\mathbb{Q})$ that satisfies $H(R)\le e^\beta$ and
	 reduces to $\tilde{R}_n$.
       \item[(b)]  If $H(R) > e^\beta$, then no point on $\mathbb{P}^{2^g-1}(\mathbb{Q})$
         of height $\leq e^\beta$ reduces
	to $\tilde{R}_n$.
\end{enumerate}
\end{prop}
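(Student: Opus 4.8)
The plan is to use a standard lattice-point-counting argument based on Minkowski's theorem, exactly as in the classical "reconstruct a rational number from its reduction mod $p^N$" trick, generalised to projective points. First I would record the key quantitative input: for a point $R = (r_1 : \cdots : r_{2^g}) \in \mathbb{P}^{2^g-1}(\mathbb{Q})$ with coprime integer coordinates and $H(R) \le e^\beta$, the vector $(r_1,\ldots,r_{2^g})$ has Euclidean norm at most $\sqrt{2^g}\, e^\beta$. Now suppose $R$ reduces to $\tilde R_n$ modulo $p^N$; after scaling by the unit that makes the normalisation match, the integer vector $(r_1,\ldots,r_{2^g})$ lies in the lattice $L$ defined in Step~\eqref{liftRn} (the lattice of integer vectors congruent to a scalar multiple of $(r_1,\ldots,r_{2^g})$ modulo $p^N$ — concretely $L$ is generated by the chosen representative and by $p^N \mathbb{Z}^{2^g}$, and has covolume $p^{N(2^g-1)}$ since the first coordinate of the generator can be taken to be a unit). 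So any such $R$ gives a nonzero lattice vector of Euclidean norm $\le \sqrt{2^g}\,e^\beta$.

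The core step is to show there is at most one such vector up to sign, and that LLL finds it. For uniqueness: if $v, v'$ are two nonzero vectors in $L$ of norm $\le \sqrt{2^g}\, e^\beta$ that are not parallel, then $v$ and $v'$ span a $2$-dimensional sublattice of $L$ whose covolume is at most $\|v\|\|v'\| \le 2^g e^{2\beta}$; but every $2$-dimensional sublattice of $L$ has covolume a multiple of $p^N$ (this is where the reduction structure of $L$ enters — all "primitive directions" other than the one coming from $(r_1,\ldots,r_{2^g})$ cost a factor $p^N$), and $p^N > 2^{g+2^g} e^{2\beta} > 2^g e^{2\beta}$ forces them to be parallel. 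Hence all short vectors are scalar multiples of a single primitive vector $v_0$, which (being primitive and of the stated norm) yields exactly one point in $\mathbb{P}^{2^g-1}(\mathbb{Q})$; this proves the uniqueness in (a). For the algorithmic claim, I would invoke the basic LLL guarantee (with $\delta = 3/4$): the first vector $w$ of an LLL-reduced basis satisfies $\|w\| \le 2^{(2^g-1)/2} \lambda_1(L) \le 2^{(2^g-1)/2}\sqrt{2^g}\,\|v_0\|$ when a short $v_0$ exists, and conversely $\|w\| \ge \lambda_1(L)\cdot 2^{-(2^g-1)/2}$, so the threshold $p^N > 2^{g+2^g}e^{2\beta}$ is precisely chosen so that: if some rational point of height $\le e^\beta$ reduces to $\tilde R_n$, then $w$ is (a multiple of) $v_0$ and hence $H(\mathbb{P} w) \le e^\beta$; and if $H(R) = H(\mathbb{P} w) > e^\beta$, then no vector of $L$ of norm $\le \sqrt{2^g}\,e^\beta$ exists at all, giving (b).

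Concretely I would organise it as: (i) normalise so the target vector lies in $L$ and compute $\operatorname{covol}(L) = p^{N(2^g-1)}$; (ii) bound $\|v\|_2 \le \sqrt{2^g}\, H(R)$ for integer-coordinate representatives of height-$\le e^\beta$ points; (iii) the two-dimensional-sublattice covolume argument to get uniqueness of the primitive short direction; (iv) feed the LLL bound $\|w\|_2 \le 2^{(2^g-1)/2}\lambda_1(L)$ together with the chosen size of $N$ to conclude that $w$ detects the short vector exactly when one exists, yielding (a) and (b) simultaneously by the dichotomy on $H(R)$.

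The main obstacle I expect is step (iii): making precise why every $2$-dimensional sublattice of $L$ \emph{other} than the one spanned with the target direction has covolume divisible by $p^N$ — i.e. correctly identifying the lattice $L$ as (a scaling of) $\{v \in \mathbb{Z}^{2^g} : v \equiv \lambda (r_1,\ldots,r_{2^g}) \bmod p^N \text{ for some } \lambda\}$ and computing sublattice covolumes via its Smith normal form $\operatorname{diag}(1, p^N, \ldots, p^N)$ (using that the first coordinate of the generator is coprime to $p$ after the Step~\eqref{liftR0}/\eqref{liftRn} normalisation). Everything else is a bookkeeping exercise with the explicit constant $2^{g+2^g}$, which is visibly engineered to absorb the two factors of $\sqrt{2^g}$ and the LLL defect $2^{(2^g-1)/2}$; I would double-check that the inequalities line up with room to spare rather than chase the optimal constant.
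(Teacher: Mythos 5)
Your proposal is correct and follows essentially the same route as the paper: build the reconstruction lattice $L$ of covolume $p^{N(2^g-1)}$, invoke the LLL approximation guarantee with $\delta=3/4$, and get uniqueness of the short direction from the fact that any pair of non-parallel lattice vectors forces a factor of $p^N$ that is too large to fit below the chosen threshold. The only real difference is presentational: the paper isolates the uniqueness step as Lemma~\ref{L:inj}, stated via $2\times 2$ minors of the stacked vectors all being divisible by $p^N$ and bounded by $2B^2$ in $\ell_\infty$-norm, whereas you phrase it as a lower bound on the covolume of the $2$-dimensional sublattice spanned by two non-parallel short vectors — the two are equivalent via Cauchy--Binet, though your appeal to the Smith normal form of $L$ for the covolume bound needs a little care (the SNF change of basis is not orthogonal), and the minors formulation sidesteps this. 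Two small slips worth fixing: the useful direction of the LLL bound is $\lambda_1(L)\ge 2^{-(2^g-1)/2}\|w\|_2$, not ``$\|w\|\ge\lambda_1(L)\cdot 2^{-(2^g-1)/2}$'', and the chain should read $\|w\|_2\le 2^{(2^g-1)/2}\lambda_1(L)\le 2^{(2^g-1)/2}\|v_0\|_2\le 2^{(2^g-1)/2}\sqrt{2^g}\,e^\beta$ (the $\sqrt{2^g}$ enters when converting $\|v_0\|_\infty\le e^\beta$ to $\ell_2$, not between $\lambda_1$ and $\|v_0\|$); moreover, your uniqueness argument as written only covers vectors of $\ell_2$-norm $\le\sqrt{2^g}e^\beta$, but it must also catch the LLL output $w$, whose norm can be as large as $2^{(2^g-1)/2}\sqrt{2^g}e^\beta$ — that is precisely why the constant is $2^{g+2^g}$ and not $2^g$, as you note at the end.
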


To lift points, we use the following result, whose proof is immediate.
\begin{lemma}\label{latticeclarification}
Let $n,d \in \mathbb{Z}_{\geq 1}$ and let $\tilde{R} \in \mathbb{P}^d(\mathbb{Z}/p^n\mathbb{Z})$.
  Let 
\[
v \colonequals (r_0, \ldots, r_{d}) \in \mathbb{Z}^{d+1}\setminus\{0\}
\]
be primitive, i.e.  $\gcd(r_0, \ldots, r_{d}) = 1$, such that
  $R\colonequals \BP v \colonequals (r_0 : \ldots : r_{d}) \in
  \mathbb{P}^d(\mathbb{Q})$ lifts $\tilde{R}$. 
Then the lattice $L$ generated by $\{v\} \cup \{e_ip^n : 0 \leq i \leq d\}$
contains all vectors $w$ such that the corresponding point $\BP w\in \mathbb{P}^d(\mathbb{Q})$
reduces modulo $p^n$ to $\tilde{R}$. Moreover, let
\[
u \colonequals a_0v + p^na_1e_1 + \cdots p^na_{d+1}e_{d+1} \in L\,,
\]
 where $a_0,\ldots,a_{d+1} \in \mathbb{Z}$. If  $p\nmid a_0$,
then  $\BP u\in \mathbb{P}^d(\mathbb{Q})$ reduces modulo
$p^n$ to $\tilde{R} \in \mathbb{P}^d(\mathbb{Z}/p^n\mathbb{Z})$.
\end{lemma}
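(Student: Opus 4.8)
The plan is to unwind the definition of the reduction map $\mathbb{P}^d(\Q) \to \mathbb{P}^d(\Z/p^n\Z)$ and translate both assertions into elementary congruences; nothing more than that should be needed, which is why the statement is flagged as immediate. Recall that a point of $\mathbb{P}^d(\Q) = \mathbb{P}^d(\Z)$ is represented by a primitive integer vector, unique up to sign, and that reducing its coordinates modulo $p^n$ yields a well-defined point of $\mathbb{P}^d(\Z/p^n\Z)$: primitivity over $\Z$ forces some coordinate to be coprime to $p$, hence a unit modulo $p^n$. Moreover, two integer vectors that are primitive modulo $p$ represent the same point of $\mathbb{P}^d(\Z/p^n\Z)$ precisely when they differ by multiplication by a unit of $\Z/p^n\Z$. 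In particular, since $v$ is primitive over $\Z$ and $\BP v$ reduces to $\tilde R$, the vector $v \bmod p^n$ is a representative of $\tilde R$ with a unit coordinate.

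For the first claim I would take an arbitrary nonzero integer vector $w$ with $\BP w$ reducing to $\tilde R$, write $w = c w'$ with $c = \gcd$ of the coordinates and $w'$ primitive over $\Z$, and note that $\BP w = \BP w'$ reduces to $\tilde R$ as well. Then $w' \bmod p^n$ and $v \bmod p^n$ both represent $\tilde R$, so there is a unit $\lambda \in (\Z/p^n\Z)^\times$ with $w' \equiv \lambda v \pmod{p^n}$. Lifting $\lambda$ to an integer $a_0$ coprime to $p$, the vector $w' - a_0 v$ has every coordinate divisible by $p^n$, hence lies in the sublattice spanned by $p^n e_0, \dots, p^n e_d$; therefore $w' = a_0 v + \sum_i p^n b_i e_i \in L$ and $w = c w' \in L$. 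For the ``moreover'' part I would simply observe that $u \equiv a_0 v \pmod{p^n}$ with $a_0$ a unit modulo $p^n$ (since $p \nmid a_0$), so $u$ is primitive modulo $p$ and $u \bmod p^n$ is a unit multiple of $v \bmod p^n$; dividing $u$ by its content (which is coprime to $p$) and reducing modulo $p^n$ therefore yields a representative of $\tilde R$, i.e.\ $\BP u$ reduces to $\tilde R$.

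There is no real obstacle here; the only point deserving a word of care — and the reason the first assertion is an inclusion rather than an equality — is that lying in $L$ is necessary but not sufficient for $\BP w$ to reduce to $\tilde R$: vectors such as $p^n e_0 \in L$ reduce to a different point of $\mathbb{P}^d(\Z/p^n\Z)$ (or, if $w$ is not primitive modulo $p$, need not define a point there at all). The hypothesis $p \nmid a_0$ in the second half is exactly what excludes this, and once one is mildly careful to reduce only vectors that are primitive modulo $p$, both statements follow from the unit-scaling description of $\mathbb{P}^d(\Z/p^n\Z)$.
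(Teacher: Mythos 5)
The paper does not actually supply a proof of Lemma~\ref{latticeclarification}: it is introduced with the phrase ``whose proof is immediate'' and left to the reader. Your argument correctly fills in exactly what is meant. You reduce everything to the description of $\mathbb{P}^d(\Z/p^n\Z)$ in terms of vectors with a unit coordinate, up to scaling by a unit of $\Z/p^n\Z$; the inclusion is obtained by factoring a given $w$ into content times a primitive part, matching the primitive part to $v$ modulo $p^n$ up to a unit, and lifting that unit to an integer $a_0$ coprime to $p$; and the converse uses $p\nmid a_0$ together with primitivity of $v$ to keep the content of $u$ coprime to $p$. Your closing remark that membership in $L$ is necessary but not sufficient (e.g.\ $p^n e_0\in L$, whose primitive part is $e_0$, reduces to the wrong point) is the right observation, and is precisely why the second statement needs the hypothesis $p\nmid a_0$. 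A small editorial note: the lemma's ``moreover'' clause writes the basis vectors as $e_1,\dots,e_{d+1}$ rather than $e_0,\dots,e_d$, which is a slip in the paper; you silently and correctly use the latter indexing throughout.
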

Moreover, we need the following uniqueness result.
\begin{lemma}\label{L:inj}
  Let $B\ge 1$ be a real number and let $d\in\Z$ be positive.
  Let $u,u'\in \Z^{d+1}\setminus\{0\}$ such that
  \begin{enumerate}[(a)]
    \item\label{inja} $\|u\|_\infty, \|u'\|_\infty \le B$
    \item\label{injb} there is an integer $D>2B^2$ such that all $2\times 2$ minors of the matrix $\begin{pmatrix}
    u\\u'\end{pmatrix}\in \Z^{2\times (d+1)}$ are divisible by $D$.
  \end{enumerate}
  Then the points $\BP u$ and $\BP u'$ in $\BP^{d}(\Q)$ represented by $u$
  and $u'$, respectively, are equal.
\end{lemma}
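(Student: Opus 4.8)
The plan is to argue by contradiction: suppose $\BP u \neq \BP u'$ in $\BP^d(\Q)$. Then $u$ and $u'$ are linearly independent over $\Q$, so the matrix $\begin{pmatrix} u \\ u' \end{pmatrix}$ has rank~$2$ and at least one of its $2\times 2$ minors is nonzero. Let $m$ be such a nonzero minor; it is the determinant of a $2\times 2$ submatrix formed from two coordinates, say the $i$-th and $j$-th, so $m = u_i u'_j - u_j u'_i$. By hypothesis~\eqref{injb}, $D \mid m$, hence $|m| \ge D > 2B^2$ since $m\neq 0$.

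On the other hand, the bound in~\eqref{inja} gives $|u_i|, |u_j|, |u'_i|, |u'_j| \le B$, so
\[
  |m| = |u_i u'_j - u_j u'_i| \le |u_i||u'_j| + |u_j||u'_i| \le B^2 + B^2 = 2B^2\,.
\]
This contradicts $|m| > 2B^2$. Therefore $\BP u = \BP u'$.

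The only genuine point to get right is the first step: why does $\BP u \neq \BP u'$ force the existence of a nonzero $2\times 2$ minor. This is just the standard fact that two vectors in $\Q^{d+1}\setminus\{0\}$ represent the same point of projective space precisely when they are proportional, i.e. when the $2\times(d+1)$ matrix they span has rank~$1$, equivalently all $2\times 2$ minors vanish; so if they are not proportional some minor is nonzero. I do not expect any real obstacle here — the estimate $|u_iu'_j - u_ju'_i|\le 2B^2$ is immediate from the triangle inequality, and the rest is the divisibility hypothesis together with the observation that a nonzero integer divisible by $D$ has absolute value at least $D$.
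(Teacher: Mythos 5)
Your proof is correct and is essentially the same as the paper's, just phrased contrapositively: the paper bounds all $2\times2$ minors by $2B^2$ directly and concludes from the divisibility hypothesis that they all vanish, hence $u$ and $u'$ are proportional, whereas you assume non-proportionality and derive the same size estimate on a nonzero minor to reach a contradiction. The key estimate $|u_iu'_j - u_ju'_i|\le 2B^2$ and the observation that a nonzero multiple of $D>2B^2$ exceeds this bound are exactly the paper's argument.
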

\begin{proof}
  By~\ref{inja}, the $2\times 2$ minors have absolute value bounded by
  $2B^2$. Hence they all vanish by~\ref{injb}. Therefore $u=\lambda u'$
  for some nonzero $\lambda\in \Q$.
\end{proof}
We apply Lemma~\ref{L:inj} to the lattice $L$ from Step~\eqref{liftRn} of
Algorithm~\ref{lifttorsionpoints}. We thank an anonymous referee for
suggesting the structure of the following proof.

\begin{proof}[{Proof of Proposition~\ref{LLLjustif}}]
By Lemma~\ref{latticeclarification}, the lattice $L$ 
  in Step~\eqref{liftRn} contains all integer 
representatives of the points in $\mathbb{P}^{2^g-1}(\mathbb{Q})$ that reduce
to  $\tilde{R}_n \in \mathbb{P}^{2^g-1}(\mathbb{Z}/p^N\mathbb{Z})$ as
  obtained after Step~\eqref{liftN} of Algorithm \ref{lifttorsionpoints}.
   Moreover, any vector that corresponds to a lift of $\tilde{R}_n$
is of the form $u=a_0v + p^Na_1e_1 + \cdots p^Na_{2^g}e_{2^g}$ with $p\nmid
  a_0$.
  For such a vector $u$, we have 
  \begin{equation}\label{htnorm}
    H(\BP u)\le \|u\|_\infty,\quad\text{with equality if and only if }u
    \text{ is primitive.}
  \end{equation}

  Let $w\in \Z^{2^g}$ be the first vector of an LLL-reduced basis of $L$ as
  in  Step~\eqref{liftRn} of Algorithm \ref{lifttorsionpoints}. 
  We distinguish cases as follows. First suppose that
  $\|w\|_\infty>2^{(2^g-1)/2}\sqrt{2^g}e^\beta$.
  In this case, we claim that there is no
  nonzero vector $u\in L$ such that $\|u\|_\infty\le e^\beta$. 
  Indeed, our choice $\delta=3/4$ of parameter in the LLL-algorithm implies
  that the first basis vector of an LLL-reduced basis has euclidean length 
  at most $2^{(2^g-1)/2}$ times the euclidean length of 
  the shortest nonzero vector (see~\cite{LLLreduction}). 
  In particular, there is no point in $\BP^{2^g-1}(\Q)$ of height $\le
  e^\beta$ that reduces to $\tilde{R}_n$ by
  Lemma~\ref{latticeclarification} and by~\eqref{htnorm}.

  Now suppose that $\|w\|_\infty\le 2^{(2^g-1)/2}\sqrt{2^g}e^\beta$.
  By construction, all pairs of nonzero vectors in $L$ satisfy 
  condition~\ref{injb} of Lemma~\ref{L:inj} with $D=p^N>2e^{2\beta}$. 
  Since $2 \cdot (2^{(2^g-1)/2}\sqrt{2^g}e^\beta)^2 =
  2^{2^g}(2^g)e^{2\beta} < p^N$, 
  we can apply Lemma~\ref{L:inj} with $d=2^g-1$ and
  $B=2^{(2^g-1)/2}\sqrt{2^g}e^\beta$.
  This implies that for any
  vector $w'\in L$ satisfying $\|w'\|_\infty\le B$, we have $\BP w' = \BP w
  = R$. Hence, by
  Lemma~\ref{latticeclarification} and by~\eqref{htnorm},
  if there is a point in $\BP^{2^g-1}(\Q)$ that reduces to
  $\tilde{R}_n$ and has height $\le e^\beta$, then it must be $R$. 
\end{proof}

\subsubsection{The conclusions of the lift-checking algorithm}\label{algorithmconclusions}

%Their correctness follows from the following simple result.
%\begin{lemma}\label{conclusion}
%  Let $R \in \mathbb{P}^{2^g-1}(\mathbb{Q})$ be as obtained after Step~\eqref{liftRn} in 
%Algorithm \ref{lifttorsionpoints}. Then, the unique lift 
%$Q \in A(\mathbb{Q}_p)[m]$ of $\tilde{Q} \in \tilde{A}(\mathbb{F}_p)$ is 
%a point in $A(\mathbb{Q})[m]$ if and only if
%\begin{enumerate}
%	\item[(a)] $R \in K(\mathbb{Q})$,
%        \item[(b)] $[[m]](R) = \kappa(0)$,
%	\item[(c)] $\kappa^{-1}(R) \subset A(\mathbb{Q})$.
%\end{enumerate}
%\end{lemma}
%\begin{proof}
%\end{proof}

\begin{proof}[Proof of Theorem~\ref{prooflifttorsionpoints}]
It is clear that the algorithm terminates. 
To prove Theorem~\ref{prooflifttorsionpoints}, it suffices to prove the
correctness of Steps~\eqref{liftht}--\eqref{liftlift}
of Algorithm \ref{lifttorsionpoints},  %that lifts to $A(\mathbb{Q})[m]$,
  which we do now.
  Since a torsion point $P\in A(\Q)$ satisfies $H(P) = H(\kappa(P))\le
  e^\beta$, Proposition~\ref{step3proof} and Proposition~\ref{LLLjustif}
  imply that if there is a point
  $Q\in A(\Q)[m]$ that reduces to $\tilde{Q}$, then the point $R\in K(\Q)$ from
  Step~\eqref{liftN} of Algorithm~\ref{lifttorsionpoints} satisfies
  $R=\kappa(Q)$. 
  Clearly we then have $[[m]](R)=\kappa(0)$ and
  $\kappa^{-1}(R) = \{\pm Q\}\subset A(\Q)$, so that the algorithm returns
  TRUE.

  Conversely, suppose that the algorithm returns TRUE. Then, using
  Propositions \ref{step3proof} and \ref{LLLjustif} again, $R$
  is the unique $m$-torsion point in $K(\Q)$ that reduces to
  $\tilde{\kappa}(\tilde{Q})$. Thus the points in $\kappa^{-1}(R)$ are
  $\Q$-rational points of order $m$, so one of these two points is a point
  in $A(\Q)[m]\subset A(\Q)_{\mathrm{tors}}$ that reduces to $\tilde{Q}$.
\end{proof}

\begin{rk}\label{earlytermination}
  In practice, we can often terminate the algorithm long before the
  required precision in Step~\eqref{liftN} is reached, as follows: 
  Let $\tilde{R}_r$ be as in Step~\eqref{liftN}, for some $r<N$. From
  $\tilde{R}_r$, determine $R$ using Step~\eqref{liftRn} and check if the conditions of
  Step~\eqref{liftht}--\eqref{liftlift} are satisfied.
If they are, then we have found a point $Q \in A(\mathbb{Q})[m]$ that
reduces to $\tilde{Q}$. However, if no such point is found, then it is not guaranteed
that no other candidate exists.
\end{rk}

\subsection{Computing the rational torsion subgroup}\label{computingtorsionsubgroup}
Now that we can conclusively decide for good primes $p$  whether a point in 
$\tilde{A}(\mathbb{F}_p)$ 
lifts to $A(\mathbb{Q})_{\tors}$ or not, we can find the rational
torsion subgroup of $A(\Q)$. Since we do not assume that we can represent or compute with
general points in $A(\Q)$, we compute $\AQt$ as an abstract abelian group by finding
its invariant factors.
This is again a generalisation of the idea proposed in \cite[\textsection
11]{StollG2} for Jacobians of curves of genus~2.
For a prime $q$ and a finite abelian group $G$, we let the \textit{$q$-part of $G$} be the
$q$-Sylow subgroup of $G$, as an abstract abelian group.  Then the
reduction map $\rho_{p}\colon A(\Q_p)\to A(\mathbb{F}_p)$ is injective 
on $q$-parts of $A(\mathbb{Q})_{\tors}$ for any prime number $q \neq p$.

\begin{algo}\label{qpartoftorsion} \textbf{Computing the $q$-part of the torsion subgroup}\\
  Input: an abelian variety $A/\Q$ for which Assumption~\ref{assump} is
  satisfied and a prime $q > 2$.
  \\
  Output: The $q$-part
  of $A(\mathbb{Q})_{\tors}$ as an abstract abelian group.

\begin{enumerate}
  \item\label{qp1} Let $G_0$ be the $q$-part of $\tilde{A}(\mathbb{F}_p)$, where $p$ is a
          good prime not equal to $q$.
		  Set $T_0 \colonequals  \{0\} \subset G_0$, $S_0
                  \colonequals  G_0 \setminus \{0\}$, $S_0' \colonequals 
                  \{0\}$. (Throughout, $G_i$ is a quotient of $G_0$, $T_i$
                  is a subgroup of $G_0$, and
    $S_i$ and $S'_i$ are subsets of $G_i$.)
                \item\label{qp2} Set $n \colonequals  0$, repeat the following steps until $S_n = \emptyset$.
                  \begin{enumerate}
          \item\label{qp21} Let $g \in S_n\subset G_n$ and choose a
            representative $\tilde{g}\in G_0$ of $g$.
                \item\label{qp22} Using Algorithm~\ref{lifttorsionpoints}, compute the
                  smallest {$\ell>0$} such that $q^\ell \tilde{g}$ lifts to $A(\mathbb{Q})$. 
                \item\label{qp23} Set
			\begin{align*}
				T_{n+1} &\colonequals  \langle T_n, q^\ell
                                \cdot \tilde{g}
                                \rangle\,,\\
				G_{n+1} &\colonequals  G_n/\langle q^\ell  \cdot g
                                \rangle\,,\\
                                S_{n+1}' &\colonequals\; \textrm{image
                                of}\;  S_n' \cup \langle g
                                \rangle\;\text{in}\; G_{n+1}\,,\\
				S_{n+1} &\colonequals  G_{n+1} \setminus S_{n+1}'\,.
			\end{align*}
                      \item\label{qp24} Set $n \colonequals n + 1$.
	\end{enumerate}
      \item\label{qp3} Return $T_n$ as an abstract abelian group.
\end{enumerate}
\end{algo}

%In the algorithm, each $G_n$ represents the group of points that do not lift or are yet to be checked, $T_n$ is the group of points that we have found to lift so far, $S_n$ is the set of points that are yet to be checked, and $S_n'$ are points that have been checked.

  It is preferable to take a primitive element in Step~\eqref{qp21}, but
  this is not required. In Step~\eqref{qp1}, we typically pick a prime 
$p$ such that the $q$-part of $\tilde{A}(\mathbb{F}_p)$ is small. If it is
  trivial, then there is nothing to do. In practice, we have already computed
  $\tilde{A}(\mathbb{F}_p)$ for all good primes below some bound, see
  Algorithm~\ref{torsionsubgroup} below.

%%%%%%%%%%%%%%%%%%%%%%%%%%%%%%%%%%%%%%%%%%%%%%%%%%%%%%%%%%%%%%%%%%%%%%%%
\subsubsection{Two-power torsion}\label{subsec:2pow}
Algorithm \ref{lifttorsionpoints} excludes the case $m = 2$ 
because the lifting procedure does not work on points of order $2$, since $\kappa(A[2])$ consists of singular points. 
  It is still possible to 
compute $A(\Q)[2]$, for instance by finding the solutions $R\in K(\Q)$
  of the projective system of equations
  $[[2]](R) = \kappa(0)$ and checking which of these lift . Hence we
  can skip this case in Algorithm~\ref{qpartoftorsion}.
We can,
alternatively, determine $A(\Q)[2^\infty]$ iteratively as follows: For
$s\ge2$, we find $A(\Q)[2^s]$ from $A(\Q)[2^{s-1}]$ for $s
\ge 2$ by solving the system $[[2]](R)
= S$ for each $S\in
\kappa(A(\Q)[2^{s-1}])$ and checking which solutions lift.
We implemented this strategy for Jacobians of hyperelliptic
curves of genus~3, but we found that this is quite expensive. Fortunately, in this case  there is a simpler method, discussed in~\S\ref{g3twotorsion}.

%%%%%%%%%%%%%%%%%%%%%%%%%%%%%%%%%%%%%%%%%%%%%%%%%%%%%%%%%%%%%%%%%%%%%%%%
\subsubsection{The algorithm}

\begin{algo}\label{torsionsubgroup} \textbf{Computing the Torsion Subgroup}\\
  Input: an abelian variety $A/\Q$  satisfying Assumption~\ref{assump}. 
  \\Output: the invariant factors of
  $A(\mathbb{Q})_{\tors}$.\par
\begin{enumerate}
  \item\label{t1} Compute a height difference  bound $\beta$.
  \item\label{t2} Compute a multiplicative upper bound $t$ for the size of the torsion 
	subgroup by computing the
        structure of 
        $\tilde{A}(\mathbb{F}_p)$ for a reasonable number of good odd primes $p$.
      \item\label{t3} For each prime factor $q$ of $t$, compute the $q$-part of
        $A(\mathbb{Q})_{\tors}$
          using Algorithm \ref{qpartoftorsion} and~\S\ref{subsec:2pow}.
        \item\label{t4} Deduce the invariant factors of 
          $A(\mathbb{Q})_{\tors}$ from the invariant factors of its
          $q$-parts.

\end{enumerate}
\end{algo}

\begin{rk}\label{R:kummer-to-jac}
  If we can describe points in $A(\Q)$ explicitly  and if we have an algorithm to compute
  $\kappa^{-1}(R)$ for given $R\in K(\Q)$, then we can also return $\kappa^{-1}(R)$ in
  Step~\eqref{liftlift} of Algorithm~\ref{lifttorsionpoints}. 
  In this case, we can also compute (generators for) the $q$-part 
  in Algorithm~\ref{qpartoftorsion}, rather than only its structure as an abstract abelian
  group. Hence we can amend Algorithm~\ref{torsionsubgroup} to find generators for
  $A(\Q)_{\tors}$.
\end{rk}

\subsection{Avoiding the use of sum-and-difference-laws}\label{avoidbqf}
In practice, one of the most expensive tasks in
Algorithm~\ref{torsionsubgroup} is the computation of $[[n]](R)$ for points $R\in K$ and
potentially large $n\in \Z$.
Namely, in Algorithm~\ref{lifttorsionpoints},
we apply $[[M]]$ in Step~\eqref{liftRn+1}
and we apply $[[m]]$ in Step~\eqref{lift0}.
Recall from Algorithm~\ref{kummerarithmetic} that the
multiplication-by-$n$-map $[[n]]\colon K\to K$ requires formulas for the 
doubling map $[[2]]\colon K\to K$ and for the map
 $B\colon \Sym^2(K)\to \Sym^2(K)$ such that 
 $$B(\{\kappa(Q_1),\kappa(Q_2)\}) = \{\kappa(Q_1 +
Q_2), \kappa(Q_1 - Q_2)\}$$
for $Q_1,Q_2\in A$.
For Jacobians of hyperelliptic curves of genus $\le3$, the formulas for the
map $B$ are much  more
complicated than those for the map $[[2]]$.
Hence, we prefer to apply the map $[[n]]$ only 
for small $n$ of the form $n = \pm 2^s$ since 
then the doubling formulas suffice.
In addition, we might be in a situation where the doubling map $[[2]]$ is available
explicitly, but the map $B$ is not. Then it turns out that it is often still  possible to 
compute $A(\Q)_{\tors}$, as we now explain.

Recall that by construction, $m$ is a power of
a prime $q \geq 2$. In most cases, $m$ will be small.
We require $M$ to satisfy $M \equiv 1 \bmod m$
and $M \not\equiv 1 \bmod p$, so we can use 
 $M = 1 - m$ if we want to keep $M$ small. 
If $m$ is odd, it is clear that we can instead find a suitable $M$ of the form $M=\pm 2^s$,
and Step~\eqref{liftRn+1} of Algorithm~\ref{lifttorsionpoints} can be performed using only the map $[[2]]$.
This does not work when $m$ is even. However, recall that we can use the strategy
discussed in~\S\ref{subsec:2pow} to compute the 2-part of $A(\Q)_{\tors}$ without
Step~\eqref{liftRn+1} of Algorithm~\ref{lifttorsionpoints}.

Besides Step~\eqref{liftRn+1}, arithmetic on $K$ is also used  
in Step~\eqref{lift0} of Algorithm \ref{lifttorsionpoints}. Here, we check whether 
a point $R \in K(\mathbb{Q})$ satisfies $[[m]](R) = \kappa(0)$. If
arithmetic in $A(\mathbb{Q})$ is implemented, for instance when $A$ is the
Jacobian of a hyperelliptic curve of even genus or odd degree, then
we can avoid Step~\eqref{lift0} by first computing $\kappa^{-1}(R)\cap
A(\mathbb{\Q)}$. If this set is
non-empty, say containing 
a point $Q$, then  we can check directly whether $mQ= 0 \in A(\mathbb{Q})$. 
If no algorithm for arithmetic in $A(\mathbb{Q})$ is available, then we can only avoid the use of the
map $B$ in Step~\eqref{lift0} for specific values of $m$.
For instance, suppose that all prime powers $m$ dividing $t$ in Algorithm~\ref{torsionsubgroup} are at
most~60. Then we can avoid the use of $B$ if and only if all these $m$ satisfy
$m \in \{2^u : u \in \mathbb{Z}_{\geq 1}\} \cup \{3,9,5,7,17,31\}\,, $
and if $t$ is not divisible by both~7 and~9. See~\cite[\S4.7]{Rei20} for details.

\subsection{Computing torsion subgroups for Jacobians of genus 2 curves}\label{genus2}
Suppose that $A=J$ is the Jacobian of a curve $X/\Q$ of genus~2 and let $K$ denote its
Kummer surface. We may assume that $X$ is given by an equation $y^2=f(x)$,
where $f\in \Q[x]$ is squarefree and has degree 5 or 6.
If $\deg(f)=5$, then we can represent points on $J$ using the (affine) Mumford
representation. More generally, points
in $J(\Q)$ correspond
bijectively to 
triples $(A,B,C)$ of binary forms over $\Q$ of homogeneous degrees~2, 3 and 4, respectively, such that
the degree~6 homogenisation $F$ of $f$ satisfies $F = B^2-AC$
(see~\cite{BS10}).
One can use this representation to compute in the group $J(\Q)$ via a generalisation of
Cantor's algorithm~\cite{Can87}. In fact, Cantor's algorithm has
been extended to any curve of genus 2 over any field.

Assumption~\ref{assump} is satisfied for $J$:
\begin{itemize}
    \item A morphism $\kappa\colon J\to \BP^3$ such that $\kappa(J)$ is a model for $K$
      was given by Flynn~\cite{Fly93}, see also~\cite[Chapter~3]{Prolegomena}. In this case the Kummer surface
    is a quartic hypersurface.
  \item A point in $K(\Q)$ lifts to $J(\Q)$ if and only if the expressions in
    Equations~(5.1, 5.2) of~\cite{Sto02} are squares in $\Q$.
  \item The map $[[2]]$ is given by quartic polynomials and $B\colon
    \Sym^2(K)\to \Sym^2(K)$ is given by
    biquadratic forms; explicit formulas can be found in~\cite[Section~3]{Prolegomena}.
  \item There is an explicit theory of heights which allows us to compute a height
    difference bound $\beta$; see~\cite{Fly95, FlynnSmart, StollG2, MullerStollG2}.
\end{itemize}
Hence Algorithm~\ref{torsionsubgroup} can be used to compute $\#J(\Q)_{\tors}$.  
In fact, one can compute (in) $\tilde{J}(\F_p)$ for primes $p$ of good reduction using the
(generalised) Mumford representation, which is faster than the approach in
Remark~\ref{R:arithred}.
Moreover, we can compute 
$J(\mathbb{Q})[2]$ easily using the prime
factorisation of $f$ in $\mathbb{Q}[x]$, see \cite[Lemma 4.3, Lemma 5.6]{Stoll2desc}.

Using the generalised Mumford representation, we
can actually compute generators of $J(\Q)_{\tors}$. As mentioned above, this is essentially already
discussed in~\cite[\S 11]{StollG2} and an implementation is available in~{\tt Magma}.

\section{Computing torsion subgroups of Jacobians of genus 3 hyperelliptic curves}\label{genus3}
Section \ref{algorithm} gives a complete algorithm to compute the torsion
subgroup for an abelian variety that satisfies Assumption~\ref{assump}. In this section, we
show that Assumption~\ref{assump} is satisfied 
when {$A=J$} is the Jacobian of a hyperelliptic curve of genus~3. Hence we obtain an
algorithm to compute $J(\Q)_{\tors}$, which we have implemented in {\tt Magma} and which
is available at~{\url{https://github.com/bernoreitsma/g3hyptorsion}}. This answers
a question raised by Andrew Sutherland at the 2017 Banff Workshop ``Arithmetic Aspects of
Explicit Moduli Problems''.

Throughout this section, we fix a field $k$ such that $\mathrm{char}(k)\ne
2$ and a hyperelliptic curve $X/k$ of genus~3 given by an equation
\begin{equation*}
  X\colon y^2 = f(x)\,,
\end{equation*}
where $f\in k[x]$ is squarefree of degree~7 or~8. 
Let $\iota\colon X\to X$ be the hyperelliptic involution and let $J/k$ be the Jacobian of
$X$. We will represent (most) points on $J$ using the following notion:
\begin{defn}\label{genpos}
A divisor $D $ on $X$ is \emph{in general position} if it is effective and
if there
  is no point $P \in X$ such that $D \geq (P) + \iota (P)$. 
\end{defn}

In the literature, the explicit theory of hyperelliptic curves
is usually first developed for the case where the polynomial $f$
has odd degree. More generally, if $X(k)$ contains a Weierstrass point, then we may apply
a transformation to get an odd degree equation over $k$.
In this case, every point on the Jacobian can be represented uniquely by a divisor of the form
$D-d(\infty)$, where $d\le 3$ and $D$ is in 
in general position.
This
leads to the unique Mumford representation $(a,b)$ of a point $Q\in J(k)$, where $a\in
k[x]$ is monic of degree~$d$ and vanishes precisely in the $x$-coordinates of the points
in $\supp(D)$, and $b\in k[x]$ determines the $y$-coordinates. 
The Mumford representation can be used to perform arithmetic in $J(k)$ 
using Cantor's algorithm~\cite{Can87}.
Based on this, an explicit theory of the Kummer variety was found for the degree~7 case
in~\cite{Stu00, MulExplKum3}.\par

For our application, we do not assume that $X$ contains a $k$-rational Weierstrass point (or, in
fact, any $k$-rational point). 
Instead, we rely on an explicit theory of the Kummer variety in the general case developed and implemented  by Stoll (see~\cite{StollG3, StollMAGMA}).
We summarise his results here and describe a few modest additions of ours.

\subsection{Representing points on the Jacobian}\label{g3divrep}
In order to find an explicit map $\kappa \colon J \rightarrow \mathbb{P}^7$ such
that $\kappa(J)$ is a model of $K$, we need
an explicit description of points on $J$ without the assumption $\deg(f)=7$.  We will now show that we can represent points $Q$ on $J$ using
divisors of degree $4$, but we cannot expect uniqueness anymore.\par

We follow the discussion in~\cite{StollG3}. The idea is to use the
canonical isomorphism between $\Pic^0(X)$ and $\Pic^4(X)$ given by adding the
canonical class. Let the divisor $D_\infty$ on $X$ be equal to $2(\infty)$ if $\deg(f)=7$ and to
$(\infty_1)+(\infty_2)$ otherwise, where $\infty_1$ and $\infty_2$ are the two points at
infinity on $X$. Then $2D_\infty$ is a canonical divisor of $X$.

\begin{prop}\label{d_q}
For every $Q \in J\setminus\{0\}$, exactly one of the following holds
  \begin{enumerate}[(a)]
    \item\label{dqgen} $Q = [D_Q - 2D_\infty]$ for a divisor
    $D_Q$ of degree~4 in general position;
  \item\label{dqspec}  $Q = [D_Q - D_\infty]$ for a divisor
    $D_Q$ of degree~2 in general position.
\end{enumerate}
  In case~\ref{dqspec}, the divisor $D_Q$ is uniquely determined by $Q$.
\end{prop}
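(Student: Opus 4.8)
The plan is to use Riemann--Roch together with the canonical isomorphism $\Pic^0(X)\xrightarrow{\sim}\Pic^4(X)$, $[E]\mapsto [E+2D_\infty]$, which is valid because $2D_\infty$ is a canonical divisor. So fix $Q\in J\setminus\{0\}$ and write $Q = [E-2D_\infty]$ with $\deg E = 4$. Adding the canonical class, $Q$ corresponds to the class $[E]\in\Pic^4(X)$; by Riemann--Roch, $\ell(E) = 4 - 3 + 1 + \ell(K-E) = 2 + \ell(K-E)$, where $\deg(K-E) = 4 - 4 = 0$. Hence $\ell(E)$ is either $2$ (the generic case, $K-E$ not principal) or $3$ (the case $[E]$ equal to the canonical class, i.e. $Q=0$, which we have excluded, so in fact $\ell(E)\ge 2$ always — wait, we need to be careful: $\ell(K-E)=0$ unless $K-E\sim 0$, and $K-E\sim 0$ means $E\sim 2D_\infty$, i.e. $Q = 0$). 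So for $Q\ne 0$ we always have $\ell(E) = 2$, and the linear system $|E|$ is a pencil.

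The next step is to extract from the pencil $|E|$ a divisor in general position, or to discover that we are forced into case \ref{dqspec}. I would argue as follows: suppose every effective divisor in $|E|$ is \emph{not} in general position, i.e. every $E'\in |E|$ contains $(P)+\iota(P)$ for some $P$. The $\iota$-invariant divisors $(P)+\iota(P)$ all lie in the hyperelliptic class $\mathfrak{g}^1_2 = [D_\infty']$ — more precisely each such is linearly equivalent to $D_\infty$ (using $\deg f\in\{7,8\}$, $D_\infty$ represents the unique $g^1_2$). If some $E' = (P)+\iota(P) + E''$ with $\deg E'' = 2$, then $[E''] = [E' - D_\infty] = [E] - [D_\infty]$ is a fixed class of degree $2$, so $Q = [E''-D_\infty]$, putting us in case \ref{dqspec}. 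In that subcase I must show $D_Q := E''$ can be taken in general position and is \emph{unique}: uniqueness follows because a degree-$2$ divisor class with $\ell \ge 2$ would be the $g^1_2$, forcing $Q$ to be represented by... let me reconsider — if $E''$ is in general position and $[E'']$ had another general-position representative $E'''$, then $E'' - E'''$ is principal of degree $0$; a nonconstant function with pole divisor of degree $\le 2$ would give $\ell(E'')\ge 2$, hence $|E''|$ a $g^1_2$, hence $E''\sim D_\infty$ and $Q=0$, contradiction. So $\ell(E'')=1$ and $E''$ is the unique effective divisor in its class — in particular it is automatically in general position (if it contained $(P)+\iota(P)$ its class would be $[D_\infty]$ plus an effective degree-$0$ divisor, i.e. $=[D_\infty]$, again $Q=0$).

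It remains to handle case \ref{dqgen}: if \emph{not} every member of $|E|$ is non-general-position, pick $E'\in|E|$ in general position and set $D_Q := E'$; then $Q = [D_Q - 2D_\infty]$ as required. Finally I would check the dichotomy is exclusive: if $Q$ were simultaneously of the form \ref{dqgen} and \ref{dqspec}, then $[D_Q^{(a)}] = [D_Q^{(b)} + D_\infty]$ in $\Pic^4$, and since $D_Q^{(b)}+D_\infty \ge D_\infty$ is visibly non-general-position while... hmm, this needs the observation that $D_Q^{(b)} + D_\infty$ contains $(\infty)+\iota(\infty)$ (or $(\infty_1)+(\infty_2)$), which is of the form $(P)+\iota(P)$, so it is never in general position; but if $\ell(E)=2$ and \emph{some} representative is in general position, I'd need that this does not contradict another representative failing — actually exclusivity should instead be read off from uniqueness in \ref{dqspec} combined with a dimension count: in case \ref{dqspec} one checks $\ell(D_Q^{(b)}+D_\infty) = 2$ with the pencil consisting entirely of divisors $\ge$ some $(P)+\iota(P)$, whereas case \ref{dqgen} asserts existence of a general-position member — so the two cases are distinguished by whether $|E|$ contains a general-position divisor.

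\textbf{Main obstacle.} The genuinely delicate point is the structural analysis of the pencil $|E|$ when $Q\ne 0$: showing that the only obstruction to finding a general-position representative is the ``special'' degree-$2$ situation, and nailing down that in the special case the degree-$2$ divisor is unique and automatically in general position. Concretely this is the statement that a base-point-free-or-not $g^1_2$-containing pencil forces the hyperelliptic class, and everything reduces to the fact that $X$ has a \emph{unique} $g^1_2$ (since $g=3\ge 2$). I expect the cleanest write-up to follow Stoll~\cite{StollG3} closely, invoking Riemann--Roch and the uniqueness of the hyperelliptic pencil rather than manipulating divisors by hand.
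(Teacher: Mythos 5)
Your proposal follows essentially the same route as the paper: apply Riemann--Roch to get an effective degree-$4$ representative of $Q$ in $|E_Q + 2D_\infty|$, strip off a divisor of the form $(P)+\iota(P)$ if the representative is not in general position, and then use the structure of the residual linear system to get uniqueness and exclusivity. The one place where you wave your hands — showing that in case~\ref{dqspec} the pencil $|D_Q + D_\infty|$ consists \emph{entirely} of divisors of the form $D_Q + (P) + \iota(P)$ — is exactly what the paper nails down concretely: by Riemann--Roch and $Q \ne 0$ one has $\ell(D_Q + D_\infty) = 2$, and since $1, x \in L(D_Q + D_\infty)$ these two functions form a basis, so every member of the pencil is $D_Q + D_\infty + \operatorname{div}(a+bx) = D_Q + (P) + \iota(P)$. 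Your appeal to the uniqueness of the $g^1_2$ is a valid alternative but you correctly flag that you have not closed this loop; the paper's explicit basis computation is shorter and avoids any detour through Clifford or general theory of hyperelliptic pencils.
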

\begin{proof}
  The proof is sketched in \cite[\textsection 2]{StollG3}. 
  Suppose that $Q\in J\setminus\{0\}$ is represented by $E_Q\in \Div^0(X)$.
  By Riemann--Roch, the Riemann--Roch space $L(E_Q+2D_\infty)$ has
  dimension at least~2; let $\varphi$ be a nontrivial element. Then
  $D'_Q\colonequals E_Q+2D_\infty+\div(\varphi)$ is effective of degree~4
  and we have $Q = [D'_Q-2D_\infty]$. If $D'_Q$ is in general position,
  then we set $D_Q\colonequals D'_Q$; otherwise $D'_Q = D_Q+(P) +
  (\iota(P))$ for some $P \in X$ and $D_Q$ is effective of degree~2 and in
  general position, since 
 $Q\neq 0$.

  For $D_Q$ of degree~2 in general position, the
  Riemann--Roch space $L(D_Q+D_\infty)$ has
  dimension~2, generated by $1$ and $x$. Hence all elements of the
  corresponding linear system, containing all divisors linearly
  equivalent to $D_Q+D_\infty$, are of the form $D_Q+(P) + (\iota(P))$
  for some $P\in X$. This   shows that the two
  cases \ref{dqgen} and~\ref{dqspec} are mutually exclusive and also
  proves the uniqueness of $D_Q$ in case~\ref{dqspec}.
%The discussion in \cite[\textsection 2]{StollG3} shows the existence of $D_Q$ of the form
%  
%By Riemann-Roch, the two cases are mutually exclusive.
\end{proof}
From now on, we say that $Q$ is of \emph{degree $4$} in case \ref{dqgen} and 
of \emph{degree $2$} in case~\ref{dqspec}. The point $0 \in J$
is defined to have degree $0$.\par

%\begin{lemma}\label{deg2uniqueness}
%For any $Q \in J$ of degree $2$, the divisor $D_Q$ in
%  Proposition \ref{d_q}~\ref{dqspec} is uniquely determined.
%\end{lemma}
%\begin{proof}
%  This follows from Riemann-Roch and the canonical isomorphism
%  $\Pic^0(X)\%to \Pic^4(X)$.
%\end{proof}
We first consider the case where $Q \in J$ has degree $4$. 
Then the divisor $D_Q$ in Proposition~\ref{d_q}\ref{dqgen} is not
unique by~\cite[Lemma~2.1]{StollG3}. 
As in~\S\ref{genus2}, $D_Q$
yields  a generalised Mumford representation as follows.
Let $F$ be the degree~8 homogenisation of $f$. There is a model
$  y^2=F(x,z)$
of $X$ in the weighted projective plane over $k$ with weight~1 associated to $x$ and $z$
and weight~$4=g+1$ associated to $y$.
By~\cite[page~4]{StollG3}, divisors $D\in \Div^4(X)$ in general position correspond bijectively to triples of
binary forms $A, B, C\in k[x,z]$ of degree $4$ such that
\begin{equation}\label{mumfordvariety}
B^2 - F = AC\,.
\end{equation}
The image of a point $P=(x_0:y_0:z_0)$ in the support of $D$ under the hyperelliptic
covering $\pi \colon X \rightarrow \mathbb{P}^1$ corresponds to a root of $A$ with the
correct multiplicity, and we have $y_0=B(\pi(P))$. Note that this Mumford
representation of $D_Q$ is unique up to adding multiples of $A$ to $B$.
%then there is no unique Mumford representative for $Q$ by
%the non-uniqueness of $D_Q$.

 \begin{rk}\label{lcfsq}
   If $X(k)$ is non-empty, then we can find an equation $y^2=f(x)$ for $X$ such that
 we  either have $\deg(f)=7$ or we have
   $\deg(f)=8$ and the  leading coefficient of $f$ is a square. We have already discussed
   the former case. In the latter case, we can
arbitrarily fix one of the two points $\infty_1,\infty_2\in X(k)$ at infinity, say
$\infty_1$. If $Q\in J(k)$ has degree~4, then requiring that $\infty_1\in \supp(D_Q)$ fixes $D_Q$ uniquely. 
By the above, we can represent $Q$ using a triple $(A,B,C)$ representing
   $D_Q$. Moreover, we can use
   this representation for arithmetic in $J(k)$ using a generalisation of Cantor's
   Algorithm. This is implemented in {\tt Magma}. In practice, it is better
   to use 
   Sutherland's balanced divisor approach~\cite{sutherland}, which is
   more efficient. It also requires the existence of a $k$-rational point.
 \end{rk}

If the leading coefficient of $f$ is not a square in $k$, then it is not clear how to
represent degree~4 points consistently (and hence uniquely). In this case, {\tt Magma} does
not represent such points and arithmetic in $J(k)$ has not been implemented.

\subsection{The Kummer variety}\label{subsec:Kumg3}
In~\cite[Lemma 2.1]{StollG3}, Stoll shows that there is a subgroup $\Gamma$
of $\mathrm{SO}(Q)$, where $Q$ is the ternary quadratic form $y^2-xz$,  with the following property: Two triples $(A,B,C)$ and
$(A',B',C')$ represent 
divisors in general position of degree~4 with the same image on $J$
if and only if they are equivalent under the 
action of $\Gamma$. Moreover, they represent inverse points 
if and only if they are equivalent under the action of
 $-\Gamma$. 
 Stoll then uses this observation to construct
the Kummer variety $K$ of $J$ explicitly as follows. 
There is a canonical theta divisor $\Theta$ on $J$ such that the support of
$\Theta$ consists of $0$ and
the points on $J$ of degree~2 in the sense of Proposition~\ref{d_q}.
A basis for the Riemann-Roch space $\mathcal{L}(2\Theta)$ defines a rational map $\kappa\colon J\to \BP^7$ such that $\kappa(J)$ is a model for the Kummer variety $K$ of $J$.
By the above, the complement of the image of $\Theta$ in $\Pic^4$ under the canonical
isomorphism can be described by the affine variety $V$ defined 
by~\eqref{mumfordvariety}, 
quotiented out by the action of $\Gamma$. 
Stoll finds a basis $\xi_1,\ldots,\xi_8$ of $\mathcal{L}(2\Theta)$ from $\pm \Gamma$-invariants in $k[V]$. 
Let 
$\kappa \colon J \rightarrow \mathbb{P}^7$ be the map defined by
$\xi_1,\ldots,\xi_8$.
Then $\kappa$ is invariant under multiplication by $-1$ on $J$. 
Hence its image $K \colonequals \kappa(J)$ describes a birational model of the Kummer variety by~\cite[Theorem 2.5]{StollG3}.

According to~\cite[Proposition 3.1]{MulExplKum3}, $K$ can be 
			defined by quartic relations.
To find such relations, Stoll notes that $\xi_1,\ldots,\xi_7$ are of degree~2 in the
coefficients of $A,B$ and $C$, whereas $\xi_8$ is
quadratic in $\xi_1,\ldots,\xi_7$, leading to a quadratic relation, and hence 36
quartic ones, satisfied by the
$\xi_i$. By~\cite[Theorem 2.5]{StollG3}, one needs an additional~34 quartic relations;
such relations are constructed before~\cite[Lemma~2.2]{StollG3}.

To describe the map $\kappa$ on points $Q\in J(k)$ of degree~2 (which lie on $\Theta$),
Stoll
approximates the divisor $D_Q+D_\infty$, where $D_Q$ is as in
Proposition~\ref{d_q}\ref{dqspec} (see the discussion following~\cite[Theorem
2.5]{StollG3}). 
Write $D_Q = (P_1)+(P_2)$, where $P_i = (x_i : y_i : z_i)\in X$, and
$$A(x,z) = (z_1x-x_1z)(z_2x - x_2z)\equalscolon a_0x^2 + a_1xz +
a_2z^2\in k[x,z]\,.$$
Then we have
\begin{equation*}
\kappa(Q) = (0 : a_0^2 : a_0a_1 : a_0a_2 : a_1^2 -
  a_0a_2 : a_1a_2 : a_2^2 : \xi_8)\,.
\end{equation*}
If $z_1=z_2=1$ and  $x_1\ne x_2$, then $a_0=1$ and  
$\xi_8= \frac{2y_1y_2 - G(x_1,x_2)}{(x_1 - x_2)^2}$, where 
\begin{equation*}
G(x_1, x_2) = 2\sum_{j=0}^4 f_{2j}(x_1x_2)^j + (x_1 + x_2)\sum_{j=0}^3
  f_{2j + 1}(x_1x_2)^j
\end{equation*}
and $F(x,z) = f_0z^8+f_1xz^7+\ldots+f_8x^8$. 
In this case, $\xi_8$ satisfies
\begin{equation}\label{expansionequation}
((x_1 - x_2)^2\xi_8 + G(x_1, x_2))^2 - 4f(x_1)f(x_2) = 0\,.
\end{equation}
We now give $\kappa(Q)$ explicitly for the remaining special cases. More details can be
found in~\cite[\S5.4]{Rei20}.
If $z_1=z_2=1$ and $x_1=x_2$, then we can find $\xi_8$  by writing~\eqref{expansionequation}  as 
\begin{equation}\label{sigmaexpansionsum}
s_2\xi_8^2 + s_1\xi_8 + s_0 = 0.
  \end{equation}
Then $s_2=0$ and $s_1 = -2G(x_1, x_1) = -4f(x_1)$. If $s_1\ne 0$, then
$Q\ne 0$,
and it follows that \begin{equation}\label{explicitdeg2mapmult2}
  \kappa(Q) = \left(0 : 1 : -2x_1 : x_1^2: 3x_1^2 : -2x_1^3 : x_1^4 :
  \frac{-s_0}{s_1}\right).
\end{equation}

If $z_1=1$ and $P_2 = (1 : w : 0)$ for some $w \in \bar{k}$ such that
$w^2 = f_8$, then we can use an approximation to find 
\begin{equation}\label{explicitdeg1map}
\kappa(Q) = (0 : 0 : 0 : 0 : 1 : -x_1 : x_1^2 : 2y_1w - 2f_8x_1^4 - f_7x_1^3).
\end{equation}

If 
$P_1 = P_2 = (1 : w : 0)$, then we can use~\eqref{sigmaexpansionsum} to find
\begin{equation}\label{2infty}
\kappa(Q) = (0: 0 : 0 : 0 : 0 : 0 : 4f_8 : 4f_6f_8 - f_7^2).
\end{equation}

Finally, we have
\begin{equation*}
\kappa(0) = (0:0:0:0:0:0:0:1).
\end{equation*}

\begin{rk}\label{R:}
  If $s_2=0$, then we can also express $\xi_8$ in terms of the
  coefficients of the polynomials $A,B,C$: 
  \begin{equation*}
    \xi_8 = -a_0^3c_6 - a_0^2a_2c_4 + 2a_0^2b_2b_4 - 2a_0a_1b_1b_4 -
    a_0a_2^2c_2 + 2a_0a_2b_1b_3 + 2a_1^2b_0b_4 - 2a_1a_2b_0b_3 - a_2^3c_0 +
    2a_2^2b_0b_2
  \end{equation*}
  where $B(x,z) = b_0z^4+b_1xz^3+b_2x^2z^2+b_3x^3z+b_4x^4$ and
  $C(x,z) = c_0z^6+\ldots+c_6x^6$.  
\end{rk}
%%%%%%%%%%%%%%%%%%%%%%%%%%%%%%%%%%%%%%%%%%%%%%%%%%%%%%%%%%%%%%%%%%%%%%%%
\subsubsection{Traces of the group law}\label{subsec:}
Recall that Assumption~\ref{assump} requires, in particular, algorithms for
\begin{itemize}
  \item the map $[[2]]\colon K \to K$ such that $\kappa(2Q) = [[2]](\kappa(Q))$ for all
    $Q\in J$;
  \item  the  map $B\colon \Sym^2(K)\to \Sym^2(K)$ such that for all $Q_1,Q_2\in J$ we
    have $$B(\{\kappa(Q_1),\kappa(Q_2)\}) = \{\kappa(Q_1 + Q_2), \kappa(Q_1
    - Q_2)\}\,.$$
\end{itemize}
Similar to the genus~2 case~\cite[Section~3]{Prolegomena}, there are homogeneous quartic
polynomials $$\delta_1,\ldots,\delta_8\in \Z[f_0,\ldots, f_8][x_1,\ldots,x_8]$$ such that 
$[[2]](R) = (\delta_1(R):\ldots:\delta_8(R))$ for all $R \in K$, normalised to map
$(0,\ldots,0,1)$ to itself. The polynomials can be constructed using representation
theory;
see~\cite[Theorem 7.3]{StollG3}. 
The map $B$ %can be expressed as a matrix of biquadratic forms $B_{ij}$ with coefficients in $\Z[f_0,\ldots,f_8]$, where $1\le i,j \le8$, normalised so that $B_{88}(0,\ldots,0,1)=1$. 
is constructed using representation theory in~\cite[Lemma~8.1]{StollG3}.

\subsection{Checking whether rational points lift to rational points}\label{g3KtoJ}
This section gives a procedure that decides whether the preimage under $\kappa$ of a point
\[
R = (\xi_1 : \ldots : \xi_8) \in K(k)
\]
is in $J(k)$ or not. 
Let $Q \in J$ such that $\kappa(Q) = R$. Then $Q$ is of degree
$4$ if and only if $\xi_1 \neq 0$. Also, we have $Q = 0$ if and only if $R = (0:0:0:0:0:0:0:1)$.

The case where $Q \in J$ has degree $4$ is treated in \cite[\textsection 4]{StollG3}.
Briefly, the idea is that when $h$ is a nonzero odd function on $J/k$, then $h^2$ induces a
function $j$ on $K/k$, and $R\in K(k)$ can only have rational preimages if
$j(R)$ is a square in $k$. Conversely, if $j(R)$ is a nonzero square in
$k$, then $R$ has rational preimages.
Stoll constructs suitable functions $j$ as 
$3 \times 3$-minors of a $4\times 4$ matrix $M = M(\xi_1,\ldots,\xi_7)$
(see~\cite[(2.7)]{StollG3}). The preimage of $R$ consists of
rational points if and only if
all values $j(R)$ are squares in $k$. 

Suppose that $Q\in J$ has degree $2$. In this case~\cite[\textsection 4]{StollG3}
suggests to simply consider the map $\kappa$ explicitly. 
 The uniqueness of the divisor $D_Q= (P_1)+(P_2)$ such that $Q = [D_Q-D_\infty]$ implies that 
$Q \in J(k)$ if and only if $D_Q$ is defined over $k$.
%As above, we write $P_i = (x_i : y_i : z_i)\in X$, and
%$$A(x,z) = (z_1x-x_1z)(z_2x - x_2z)\equalscolon a_0x^2 + a_1xz +
%a_2z^2\in k[x,z]\,.$$
By~\S\ref{subsec:Kumg3}, we have $\xi_1=0$. We now distinguish cases, using
the explicit expression for $R$ given in~\S\ref{subsec:Kumg3}. 

First suppose that $\xi_2 =0$. Since $Q\ne O$, we are either in
case~\eqref{explicitdeg1map} or in case~\eqref{2infty}.
If $\xi_5 = 0$, then it is the latter. Note that $\xi_7\ne 0$, since
otherwise we would
have $R = (0:0:0:0:0:0:0:1)$. 
Therefore  $P_1=P_2 =\infty_{1/2}$, and the preimage
of $R$ is rational if
and only if $X$ has rational points at infinity.

If $\xi_2 = 0$, but $\xi_5 \neq 0$, then $R$ is as 
in Equation~\eqref{explicitdeg1map}. Hence without loss of generality $P_1=
(x_1,y_1)$ and $P_2 = \infty_{1/2}$. We have $\kappa^{-1}(R)\subset J(k)$ if and only if
$\infty_{1/2}$ 
%are rational, or, equivalently, if and only if the leading
%coefficient of $f$ is a square. 
and $(x_1,y_1)$ are rational. The latter holds if and only
if $f(-\xi_6) = y_1^2$ is a square in $k$.

It remains to discuss the case $\xi_2 \neq 0$, i.e. $\deg A(x,1)=2$.
Then $ D_Q = (P_1) + (P_2)$, where
$P_i=(x_i,y_i)$ are affine and
\begin{equation}\label{Rxi0}
R=\left(0 : 1 : -(x_1 + x_2) : x_1x_2 : x_1^2 + x_1x_2 + x_2^2 : -(x_1 + x_2)x_1x_2 :
(x_1x_2)^2 : \frac{2y_1y_2 - G(x_1, x_2)}{(x_1 - x_2)^2}\right).
\end{equation}
\begin{lemma}\label{L:deg2rat}
The preimage $\kappa^{-1}(R)$ consists 
of rational points if and only if $y_1+y_2\in k$ and if one of the following conditions
is satisfied:
\begin{enumerate}
  \item $x_1=x_2$,
  \item $x_1\ne x_2$ and $\frac{y_1-y_2}{x_1-x_2}\in k$.
\end{enumerate}
\end{lemma}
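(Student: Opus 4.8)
The plan is to reduce the claim to the criterion recorded just before the lemma: since the divisor $D_Q$ with $Q=[D_Q-D_\infty]$ is uniquely determined by $Q$ (Proposition~\ref{d_q}\ref{dqspec}), and since $\kappa^{-1}(R)=\{\pm Q\}$, the set $\kappa^{-1}(R)$ consists of $k$-rational points if and only if $D_Q$ is defined over $k$; equivalently, its support $\{P_1,P_2\}$, counted with multiplicity, is stable under $\Gal(\bar k/k)$. Because $R\in K(k)$, the coordinates in~\eqref{Rxi0} give $x_1+x_2=-\xi_3\in k$ and $x_1x_2=\xi_4\in k$, so the quadratic $x^2+\xi_3x+\xi_4=(x-x_1)(x-x_2)$ has coefficients in $k$ and $\{x_1,x_2\}$ is automatically $\Gal(\bar k/k)$-stable. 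What remains is to decide when the $y$-coordinates are compatible with the Galois action on the $x_i$, and I claim this is governed precisely by the two displayed conditions.

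First I would treat the case $x_1=x_2=:x_0$, which is condition~(1). Here $x_0=-\xi_3/2\in k$ since $\Char k\neq 2$. Both $P_i$ have $x$-coordinate $x_0$, hence each is one of the (at most two) points of $X$ above $x_0$; these are $(x_0,\eta)$ and $(x_0,-\eta)$ with $\eta^2=f(x_0)$. The options $\{P_1,P_2\}=\{(x_0,\eta),(x_0,-\eta)\}$ and $P_1=P_2$ a Weierstrass point both yield $D_Q\geq(P)+\iota(P)$ for some $P$, contradicting the fact that $D_Q$ is in general position (Proposition~\ref{d_q}\ref{dqspec}); so necessarily $D_Q=2(P_1)$ with $P_1=(x_0,y_1)$ not a Weierstrass point and $y_2=y_1$. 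Consequently $D_Q$ is defined over $k$ if and only if $P_1\in X(k)$, i.e.\ if and only if $y_1\in k$; and since $y_1+y_2=2y_1$ and $\Char k\neq 2$, this is equivalent to $y_1+y_2\in k$. This is exactly the asserted criterion when condition~(1) holds.

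For condition~(2), i.e.\ $x_1\neq x_2$, I would argue with individual automorphisms $\sigma\in\Gal(\bar k/k)$. Any such $\sigma$ permutes the two distinct roots $x_1,x_2$, so either $\sigma x_1=x_1$ (and then $\sigma$ fixes $x_1-x_2$) or $\sigma x_1=x_2$ (and then $\sigma$ negates $x_1-x_2$). Put $s\colonequals y_1+y_2$ and $t\colonequals\tfrac{y_1-y_2}{x_1-x_2}$. Solving the linear system $\sigma y_1+\sigma y_2=\sigma(s)$, $\;\sigma y_1-\sigma y_2=\sigma(t)\,\sigma(x_1-x_2)$ for $\sigma y_1$ and $\sigma y_2$ (using $\Char k\neq 2$ once more), one verifies in both cases that $\sigma$ stabilises $\{P_1,P_2\}$ if and only if $\sigma$ fixes both $s$ and $t$: in the first case stability forces $\sigma y_i=y_i$, in the second it forces $\sigma y_1=y_2$ and $\sigma y_2=y_1$, and each implication reverses. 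Letting $\sigma$ range over $\Gal(\bar k/k)$ and noting $s,t\in k^{\mathrm{sep}}$ (as $\Char k\neq 2$ and $x_1\neq x_2$), we obtain that $D_Q$ is defined over $k$ if and only if $s,t\in k$ --- that is, $y_1+y_2\in k$ and $\frac{y_1-y_2}{x_1-x_2}\in k$. Combining the two cases proves the lemma.

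The only subtle point --- the ``main obstacle'' such as it is --- is the case $x_1=x_2$: one must use general position carefully to rule out $D_Q=(P)+\iota(P)$ and $D_Q=2W$ for a Weierstrass point $W$, because only the surviving possibility $D_Q=2(P_1)$ leads to the clean equivalence with $y_1\in k$. Everything else is a routine Galois-descent computation, and beyond the relations $x_1+x_2,\,x_1x_2\in k$ furnished by $R\in K(k)$ no further information about the coordinates of $R$ is needed.
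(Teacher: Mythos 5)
Your proof is correct, and it follows the same overall framework as the paper's: reduce to $k$\nobreakdash-rationality of the degree-$2$ divisor $D_Q$, observe from $R\in K(k)$ that $(x-x_1)(x-x_2)\in k[x]$, and split into the cases $x_1=x_2$ and $x_1\ne x_2$. The notable difference is in the case $x_1\ne x_2$: the paper introduces the linear Mumford polynomial $b$ with $b(x_i)=y_i$, notes $D_Q$ is $k$\nobreakdash-rational iff $b\in k[x]$, and then deduces the condition from the relation $y_1+y_2 = 2b_0 + b_1(x_1+x_2)$; you instead run a direct Galois-descent computation on the pair $(P_1,P_2)$, solving for $\sigma y_1,\sigma y_2$ in terms of $\sigma s$ and $\sigma t$ and checking stability automorphism by automorphism. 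This bypasses the Mumford representation altogether and is slightly more elementary (and sidesteps the need to make precise which degree the Mumford data has here). Your treatment of the case $x_1=x_2$ is also more explicit than the paper's terse ``$y_1=\tfrac12(y_1+y_2)$, since $Q\ne 0$'': you spell out that general position excludes both $\{P_1,P_2\}=\{(x_0,\eta),(x_0,-\eta)\}$ and the Weierstrass case, forcing $D_Q=2(P_1)$ with $y_1=y_2$, which is exactly the point that makes the equivalence with $y_1+y_2\in k$ go through. Both arguments establish the same biconditional.
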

\begin{proof}
  The divisor $D_Q$ is $k$-rational if and only if $P_1$ and $P_2$
  are $k$-rational or if $P_1,P_2\in X(k')$ for a quadratic extension $k'/k$
  and $\sigma(P_1)=P_2$, where $\sigma$ is the nontrivial element of
  $\Gal(k'/k)$. Hence a necessary condition for rationality of $D_Q$ is  
  that the polynomial $(y-y_1)(y-y_2)$ is defined over $k$. It follows
  from~\eqref{Rxi0} that $y_1y_2\in k$.
  Suppose from now on that $y_1+y_2\in k$. 

  If $x_1=x_2$, 
then $x_1=-\frac12\xi_3\in k$ and $y_1=\frac12(y_1+y_2)\in k$, since
  $Q\ne 0$ by assumption; hence $D_Q$ is $k$-rational. 
  Now assume that $x_1\ne x_2$. Then 
  there is a Mumford representation $(A,B,C)$ of $Q$ such that the
  polynomial $b(x) = B(x,1)$ is linear and satisfies
  $b(x_1)=y_1$ and $b(x_2)=y_2$, and $D_Q$ is
  $k$-rational if and only if $b\in k[x]$. 
  Write $b(x) = b_0+b_1x$, then $b_1 = \frac{y_1-y_2}{x_1-x_2}$ and $b_0 =
  \frac{y_2x_1-y_1x_2}{x_1-x_2}$. Since
  $$y_1+y_2= 2b_0 +b_1(x_1+x_2)\,,$$ and since both $y_1+y_2$ and
  $x_1+x_2=-\xi_3$
  are in $k$, we conclude that $b\in k[x]$ if and only if $b_1\in k$.
%One can show this by elementary computations.
  %See~\cite[Lemma~5.31]{Rei20} for a proof based on basic Galois theory.
\end{proof}
From~\eqref{Rxi0}, we can compute $y_1y_2$ and 
\begin{equation*}
y_1^2 + y_2^2 	= f(x_1) + f(x_2) 
                                = \sum_{j = 0}^8 f_j(x_1^j + x_2^j)\,,
\end{equation*}
hence also $(y_1\pm y_2)^2$. Since $(x_1-x_2)^2$ is also computed easily
from~\eqref{Rxi0}, we can use Lemma~\ref{L:deg2rat} in practice to
check whether $R$ lifts to rational points.

\begin{comment}
So suppose that $y_1 + y_2\in k$.

First show: $Q\in J(k)$ if and only if $\frac{y_1-y_2}{x_1-x_2}\in k$.
Hence $Q\in J(k)$ if and only if the linear coefficient $b_1$ of $B(x,1)$ is
a square in $k$. (Don't really need the latter. Simply compute
$\gamma\colonequals \left(\frac{y_1-y_2}{x_1-x_2}\right)^2$ from $R$ and
check if it's a square.

We define the polynomials $a(x) \colonequals A(x,1) =  (x - x_1)(x - x_2)$
and $b(y) \colonequals (y - y_1)(y - y_2)$. By assumption, these both have coefficients in
$k$. 
We can determine the linear coefficient of $b$ up to sign from
$(y_1+y_2)^2$ and changing the sign corresponds to inverting $Q$.
\begin{lemma}
The preimage $\kappa^{-1}(R)$ consists 
of rational points if and only if one of the following conditions
is satisfied:
\begin{enumerate}
	\item $\text{Disc}(a), \text{Disc}(b)$ are squares in $k$
	\item $\text{Disc}(a)$ is not a square in $k$, $\text{Disc}(b) = 0$
	\item $\text{Disc}(b)$ is not a square in $k$, $\text{Disc}(a) = 0$
	\item $\text{Disc}(a)$ and $\text{Disc}(b)$ are both not squares in $k$,
	and have the same squarefree part in $k$.
\end{enumerate}
\end{lemma}
\end{comment}

\begin{rk}\label{R:}
  Stoll shows in~\cite[\S4]{StollG3} how to compute a lift of $R$ when
  $X(k)$ is nonempty and the lifts of $R$ have degree~4. Using the above, we can 
  compute the unique Mumford representation of the points lifting $R$ in the degree~2 case.
\end{rk}

\subsection{Using arithmetic on reduced Jacobians}\label{g3localglobal}
Recall that Step~\eqref{t3} of Algorithm~\ref{torsionsubgroup} requires the
structure of $\tilde{J}(\mathbb{F}_p)$ for
primes of good reduction $p$, where $\tilde{J}$ is the reduction of
$J$ modulo $p$. Moreover, in Step~\eqref{qp22} of Algorithm
\ref{qpartoftorsion}, we need to enumerate all elements 
 of the $q$-parts of $\tilde{J}(\mathbb{F}_p)$, where $q$ is prime and $p\ne q$ is a prime
 of good reduction, and we need to compute scalar multiples.
\par
In Remark~\ref{R:arithred} we discussed how to compute $\tilde{J}(\mathbb{F}_p)$ 
for a prime $p$ of good reduction using arithmetic of the Kummer variety $\tilde{K}$ and checking
whether points in $\tilde{K}(\F_p)$ lift to $\tilde{J}(\F_p)$.
In practice, it turns out to be more efficient to compute $\tilde{J}(\F_p)$ using
arithmetic in $\tilde{J}(\F_p)$, if that is implemented.

Recall from~\S\ref{g3divrep} that there are algorithms (and implementations) for arithmetic
in $\tilde{J}(\F_p)$ if we know a point in $\tilde{X}(\F_p)$, but no algorithm is
known if we do not.
If $\tilde{X}(\mathbb{F}_p)$ is nonempty, then we 
 fix a point $\tilde{P} \in \tilde{X}(\mathbb{F}_p)$ and use a change of coordinates $\phi \colon \tilde{X} \rightarrow \tilde{X}'$
such that $\phi(\tilde{P})$ is a point at infinity. Let $\tilde{J}'$ be the Jacobian of
$\tilde{X}'$. Then we can compute in 
$\tilde{J}'(\mathbb{F}_p)$, for instance in {\tt Magma}. Moreover, we can enumerate
$\tilde{J}'(\mathbb{F}_p)$ and find its structure as an abelian group.
We adjust Steps~\eqref{t3} and~\eqref{t4} of Algorithm \ref{torsionsubgroup}
in the following way. Here, 
we denote the Kummer variety of $\tilde{J}'$ by $\tilde{K}'$.
\begin{itemize}
  \item In Step~\eqref{t3} and~\eqref{t4} of Algorithm \ref{torsionsubgroup}, find suitable primes $p$ 
		  with the extra condition that $\tilde{X}(\mathbb{F}_p)$ is not empty. 
	\item In Algorithm \ref{qpartoftorsion},
          let $G_0$ be the $q$-part of $\tilde{J}'(\mathbb{F}_p)$. In Step~\eqref{qp2},
                  find the smallest $m$ such that $\tilde{\kappa}(\phi_\ast^{-1}(q^m \cdot
                  g))$ lifts to $J(\mathbb{Q})$, where $\phi\colon \tilde{X}\to \tilde{X}'$ is as above.
\end{itemize}
This modification still
allows us to choose from infinitely many primes $p$, since the
Hasse-Weil bound implies
$\#\tilde{X}(\mathbb{F}_p) \geq 1$ for all good $p \geq 41$.

\begin{rk}\label{R:phiK}
In practice, we replace
  $\tilde{\kappa} \circ \phi_\ast^{-1}$
  by $\phi_{\tilde{K}}^{-1} \circ \tilde{\kappa}'$, where 
  $\phi_{\tilde{K}} \colon \tilde{K} \rightarrow \tilde{K}'$ is the isomorphism induced by
  $\phi$. Explicit formulas for $\phi_{\tilde{K}}$ are given in~\cite[Appendix~B]{Rei20}.
\end{rk}

\subsection{Computing the rational two-torsion points.}\label{g3twotorsion}

It is possible to compute $J(\mathbb{Q})[2]$ via the approach sketched 
in~\S\ref{subsec:2pow}, but this takes quite long in practice. We now
discuss a more efficient method, suggested to us by Michael Stoll.

First suppose that $\deg(f) = 7$.
Let $g_1,\ldots, g_r\in k[x]$ be the {monic
  irreducible} factors of $f$.  Then, by~\cite[Lemma 4.3]{Stoll2desc}
$J(\mathbb{Q})[2]$ is generated by the points with Mumford representation
\[
  ( g_1, 0 ), \ldots, (g_{r-1}, 0)\,.
\]
Now suppose that $\deg(f) = 8$. 
Let $\Omega\subset \overline{\mathbb{Q}}$ be the set of zeros of $f(x)$. We
call an unordered partition $\{\Omega_1, \Omega_2\}$ of $\Omega$ \textit{even} if
$\#\Omega_1$ (and hence $\#\Omega_2$) is even.
An even
partition $\{\Omega_1, \Omega_2\}$ of $\Omega$ gives rise to a two-torsion
point $P_{\Omega_1}$ ($= P_{\Omega_2}$) represented by
\begin{equation}\label{ttbijection}
  \sum_{\omega\in \Omega_1} (\omega, 0) - \frac{\#\Omega_1}{2}D_\infty
\sim \sum_{\omega\in \Omega_2} (\omega, 0) - \frac{\#\Omega_2}{2}D_\infty\,,
\end{equation}
and every point in $J(\overline{\mathbb{Q}})[2]$ arises in
this way from a unique unordered partition. See~\cite[\S5]{StollG3}
and~\cite{PoonenSchaefer}. 
More precisely,~\eqref{ttbijection} induces a bijection between $J[2]$ and the Galois-module
of unordered even partitions of roots of $f$
(see~\cite[\S6]{PoonenSchaefer}).
Hence $J(\Q)[2]$ is in bijection with the set of all unordered even partitions $\{\Omega_1,\Omega_2\}$ that are fixed by the absolute Galois group
$G_{\Q}$ of
$\Q$. For instance, if $f$ is monic and $\Omega_1$ (equivalently, $\Omega_2$) is fixed by
$G_{\Q}$, then $P_{\Omega_1} \in J(\Q)[2]$, and $P_{\Omega_1}$ has
Mumford representation $(A(x,z), 0, C(x, z))$, where $A(x,z) = \prod_{\omega\in
\Omega_1} (x-\omega z)$ and $C(x,z) = \prod_{\omega\in
\Omega_2} (x-\omega z)$.

However, in general not every point in $J(\Q)[2]$ arises in this way. It is also possible that $\{\Omega_1,\Omega_2\}$ is fixed by
$G_\Q$, but $\Omega_1$ and $\Omega_2$ are not fixed
individually. Then both $\Omega_1$ and $\Omega_2$ have
size~4 and we have $\Omega_2 = \Omega_1^\sigma$, where
$\sigma$ is the non-trivial element of $\Gal(k/\Q)$ for a quadratic number
field $k$. This corresponds to a factorisation 
$f = \mathrm{lc}(f)\cdot h\cdot h^\sigma$, where $h =\prod_{\omega\in \Omega_1}(x-\omega)\in k[x]-\Q[x]$ and $h^\sigma=
\prod_{\omega\in \Omega_2}(x-\omega)$ are coprime and $\mathrm{lc}(f)$ is
the leading coefficient of $f$. In this case, $P_{\Omega_1}$ has no
Mumford representation of the form $(A,0,C)$ defined over $\Q$ (the
degree-4 homogenisations of $h$ and $h^\sigma$ give such a representative
over $k$). The factorisation $f = \mathrm{lc}(f)\cdot h\cdot
h^\sigma$ implies that $k$ is a subfield of the \'etale algebra
$\Q[x]/(f)$.

We may use this to compute $J(\Q)[2]$ as follows.
Let $2t_\Q$ denote the number of monic even degree divisors of $f$ in
$\Q[x]$. 
For a quadratic extension $k/\Q$ with Galois group $\Gal(k/\Q) = \{1,\sigma\}$, we define 
$$
t_k \colonequals \frac{1}{2}\#\left\{h \in k[x]\,:\, h\; \text{is
monic},\; f =
\mathrm{lc}(f)\cdot h\cdot h^\sigma,\;\gcd(h, h^\sigma)=1 \right\}\,.
$$
By the discussion above, we obtain the following formula.
\begin{lemma}\label{L:deg8tors2}
  We have  $\#J(\Q)[2] = t_\Q + \sum_{k} t_k$, where $k$ runs through the
  quadratic subfields of $\Q[x]/(f)$.
\end{lemma}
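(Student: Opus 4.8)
The plan is to count $J(\Q)[2]$ by exploiting the bijection, already recalled in the excerpt, between $J[2]$ and the Galois module of unordered even partitions $\{\Omega_1,\Omega_2\}$ of the root set $\Omega$ of $f$, and then to stratify the $G_\Q$-fixed partitions according to whether the two blocks are fixed individually or only swapped. So the first step is to observe that $\#J(\Q)[2]$ equals the number of $G_\Q$-fixed unordered even partitions of $\Omega$, and to split this count as $N_{\mathrm{split}} + N_{\mathrm{swap}}$, where $N_{\mathrm{split}}$ counts those $\{\Omega_1,\Omega_2\}$ with both $\Omega_1$ and $\Omega_2$ stable under $G_\Q$, and $N_{\mathrm{swap}}$ counts those where $G_\Q$ acts transitively on $\{\Omega_1,\Omega_2\}$ but fixes the unordered pair.

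For the split part: a $G_\Q$-stable even subset $\Omega_1\subseteq\Omega$ corresponds exactly to a monic divisor $h_1 = \prod_{\omega\in\Omega_1}(x-\omega)$ of $f$ of even degree lying in $\Q[x]$ (its complement $h_2$ is then automatically a monic even-degree $\Q$-divisor with $f = \mathrm{lc}(f)h_1h_2$). Ordered such subsets are thus in bijection with monic even-degree divisors of $f$ in $\Q[x]$, of which there are $2t_\Q$ by definition; passing to the unordered pair divides by $2$ (note $\Omega_1\ne\Omega_2$ always, since $\deg f = 8$ forces $\#\Omega_1 = \#\Omega_2 = 4$ only for the equal case, and even then $\Omega_1=\Omega_2$ as sets would force $h_1 = h_2$, i.e. $f=\mathrm{lc}(f)h_1^2$, contradicting that $f$ is squarefree). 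Hence $N_{\mathrm{split}} = t_\Q$.

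For the swap part: here $G_\Q$ swaps $\Omega_1$ and $\Omega_2$, so neither is $\Q$-rational, yet the pair is; the subgroup fixing $\Omega_1$ has index $2$ in $G_\Q$, hence is $G_k$ for a unique quadratic field $k/\Q$, and $\Omega_2 = \Omega_1^\sigma$ for $\Gal(k/\Q)=\{1,\sigma\}$. As already noted in the excerpt, since $\#\Omega_1 = \#\Omega_2$ and the blocks are a genuine even partition, this forces $\#\Omega_1 = \#\Omega_2 = 4$, and it yields a factorisation $f = \mathrm{lc}(f)\,h\,h^\sigma$ with $h = \prod_{\omega\in\Omega_1}(x-\omega)\in k[x]\setminus\Q[x]$, $h^\sigma = \prod_{\omega\in\Omega_2}(x-\omega)$, and $\gcd(h,h^\sigma)=1$ (the blocks are disjoint). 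Conversely any such factorisation over a quadratic $k$ produces such a partition. For a fixed $k$, ordered choices of $h$ are counted by $2t_k$ by definition of $t_k$, and passing to the unordered pair $\{h,h^\sigma\}$ — equivalently $\{\Omega_1,\Omega_2\}$ — divides by $2$ (again $h\ne h^\sigma$ since $h\notin\Q[x]$), giving $t_k$ partitions associated to $k$. Different quadratic fields give disjoint sets of partitions, and, as remarked in the excerpt, any $k$ that occurs must be a subfield of the \'etale algebra $\Q[x]/(f)$ (the stabiliser of $\Omega_1$ corresponds to a quotient of $G_\Q$ through which the Galois action on that block factors). Summing, $N_{\mathrm{swap}} = \sum_k t_k$ over quadratic subfields $k\subseteq\Q[x]/(f)$, and adding the two contributions gives $\#J(\Q)[2] = t_\Q + \sum_k t_k$.

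The main obstacle — really the only nontrivial point — is justifying cleanly that every swap-type partition arises from a quadratic subfield of $\Q[x]/(f)$ and that the map (quadratic field, choice of $h$) $\mapsto$ partition is a bijection after the unordered identifications, i.e. that distinct $k$ cannot yield the same partition and that $t_k$ exactly counts the ordered lifts; this is where one must be careful about the correspondence between $G_\Q$-stable subsets of $\Omega$ and divisors of $f$ over the appropriate field, and about the squarefreeness of $f$ ruling out degenerate coincidences $\Omega_1 = \Omega_2$.
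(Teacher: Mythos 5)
Your proof is correct and follows the same stratification the paper uses (the paper's actual ``proof'' is just the discussion preceding the lemma, closed with ``By the discussion above''): separate $G_\Q$-fixed unordered even partitions into those whose blocks are individually $G_\Q$-stable (counted by $t_\Q$) and those where $G_\Q$ swaps the blocks (counted by $\sum_k t_k$). The only soft spot is the parenthetical justifying that $k$ embeds in $\Q[x]/(f)$ — ``the stabiliser of $\Omega_1$ corresponds to a quotient'' conflates subgroup with quotient; the clean argument is that for any root $\omega\in\Omega_1$, the stabiliser $G_{\Q(\omega)}$ fixes which block $\omega$ lies in and hence is contained in $G_k$, giving $k\subseteq\Q(\omega)$, a direct factor of $\Q[x]/(f)$ — but the paper itself asserts this without proof, so this does not count against you.
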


\begin{ex}\label{2torsextors3}
  Recall from Example~\ref{extors3} that for the Jacobian $J$ of
\[
  X \colon y^2 = x^8 + 2x^7 + 3x^6 + 4x^5 + 9x^4 + 8x^3 + 7x^2 + 2x + 1\equalscolon f(x)
\]
the group $J(\mathbb{Q})_{\tors}$
is isomorphic to a subgroup of $\mathbb{Z}/6\mathbb{Z}$.
The polynomial $f(x)$
  is irreducible over $\Q$, but it has the factorisation
  $$
      f = (x^4 + (1-2i)x^3 + (-1-2i)x^2 + (-1-2i)x - 1)
  \cdot(x^4 + (1+2i)x^3 + (-1+2i)x^2 + (-1+2i)x - 1)
  $$
  over $\Q(i)$, where $i^2=-1$.
  This shows that $\#J(\Q)[2]=2$.
\end{ex}
\begin{rk}\label{R:}
  Lemma~\ref{L:deg8tors2} holds more generally  for hyperelliptic curves
  over $\Q$ of arbitrary
  genus and even degree. However, if $g$ is even, then $\deg(f)$ is not
  divisible by~4, and hence $t_k=0$ for all quadratic fields $k$. In this
  case all rational 2-torsion points come from even degree factors of $f$
  over $\Q$ and we recover~\cite[Lemma~5.6]{Stoll2desc}.
\end{rk}

\subsection{Halving a rational point on $K$.}\label{halvingpoint}
In practice, the formulas found by Stoll in~\cite[Lemma~8.1]{StollG3} for
the  map $B$ as in Section~\ref{S:kum-heights} need a lot more space to store than the
$\delta_{i}$, and they also take longer to evaluate. 
Recall from~\S\ref{avoidbqf} that we can avoid the $B_{ij}$ altogether
in many situations. 
If $J(\Q)[2]$ is nontrivial (and we do not already know that
$J(\Q)[2^\infty]=J(\Q)[2]$), then this requires computing preimages under $[[2]]$, as
discussed in~\S\ref{subsec:2pow}.
In other words, for $\kappa(Q)=(y_1:\ldots:y_8)\in K(\Q)$ we need to solve a projective system
\begin{equation}\label{deltainv}
  \delta_i(x_1,\ldots,x_n) = cy_i\,,\quad c\in \Q^\times\,,\quad 1\le i \le 8.
\end{equation}
We have implemented this approach in {\tt Magma}, using Gr\"obner bases to find all
rational points on the zero-dimensional projective scheme defined by~\eqref{deltainv} and
the defining equations of $K$.
This approach works in practice, but we found that most of the time, computing such preimages is significantly
slower than simply using the map $B$.

An alternative approach for computing preimages under $[[2]]$ is proposed by Stoll in \cite[\textsection 5]{StollG2}
for genus $2$. We also generalised this to genus~3 and implemented this
generalisation. However, this requires working over the splitting field 
of $f$. Even when $f$ splits completely over $\Q$, we still found the
approach via $B$ to be more efficient.
%%%%%%%%%%%%%%%%%%%%%%%%%%%%%%%%%%%%%%%%%%%%%%%%%%%%%%%%%%%%%%%%%%%%%%%%
\subsection{Height difference bound}\label{subsec:betag3}
In~\cite{StollG3}, Stoll describes 
a method to compute $\beta>0$ such that the difference between the naive and the canonical
height is bounded by $\beta$. His approach generalises results for
genus~2~\cite{FlynnSmart, StollG2, MullerStollG2}. 
Stoll shows in~\cite[Corollary 10.3]{StollG3} that one can take
\[
  \beta = \frac{1}{3}|2^6\text{disc}(f)|+\frac{1}{3}\gamma_\infty\,,
\]
where $\gamma_\infty$ is an upper bound for the local height contribution $\eps_\infty$
introduced in~\cite[\S10]{StollG3}. One can find a suitable $\gamma_\infty$ using the
archimedean triangle inequality and representation theory of $J[2]$,
see~\cite[Lemma~10.4]{StollG3}. A refined bound can be obtained by iterating this
procedure~\cite[Lemma~10.5]{StollG3}.

%%%%%%%%%%%%%%%%%%%%%%%%%%%%%%%%%%%%%%%%%%%%%%%%%%%%%%%%%%%%%%%%%%%%%%%%

\section{Examples and databases}\label{examples}
We have implemented the algorithm of Section \ref{algorithm} for hyperelliptic
curves of genus $3$ using the explicit theory discussed in Section \ref{genus3} in~{\tt
Magma}.
The implementation is based on Stoll's {\tt Magma}-implementation of explicit formulas for the
Kummer variety and heights available from~\cite{StollMAGMA}.
Our code, as well as the results of the computations discussed below, can be found at  \url{https://github.com/bernoreitsma/g3hyptorsion}. 
We used {\tt Magma v2.6} on a 64-core 2.6 GHz AMD Opteron(TM) Processor 6276 with 256GB RAM,
running  {\tt  Ubuntu 18.04}.

This section provides some example computations, illustrating various aspects of the
algorithm.  We also used our implementation to compute all rational torsion subgroups in a  database maintained by Andrew 
Sutherland \cite{sutherlanddatabase}. Finally, we ran our algorithm on a large number of
hyperelliptic curves of genus~3 with small coefficients. Together with a few additional
constructions, these computations prove Theorem~\ref{T:main}.

\subsection{Example computations}
\begin{ex}
In Example \ref{extors3}, we showed that for the Jacobian $J$ of the curve
\[
X \colon y^2 = x^8 + 2x^7 + 3x^6 + 4x^5 + 9x^4 + 8x^3 + 7x^2 + 2x + 1,
\]
  we have $\#J(\Q)_{\tors}=3$.
  To find a generator using Algorithm~\ref{lifttorsionpoints}, we pick $p = 17$
because the $3$-part of $\tilde{J}(\mathbb{F}_{17})$ is isomorphic to $\mathbb{Z}/3\mathbb{Z}$.
  We choose a point $\tilde{Q}\in \tilde{J}(\F_{17})$ of order $3$ and consider
  $\kappa(\tilde{Q})\in \tilde{K}(\F_{17})$. If the lift $Q \in J(\mathbb{Q}_p)[3]$ of
  $\tilde{Q}$ is indeed in $J(\mathbb{Q})$,
then $\kappa(Q) \in K(\mathbb{Q})$. After a few iterations of the Hensel lifting,
we can check whether the coordinates define a point on $K(\mathbb{Q})$.
Indeed, after computing the power series up to $p^4$, we
  arrive at a point  $R\in K(\mathbb{Q})$ such that $[[3]](R) = \kappa(0)$, and 
  we check that $\kappa^{-1}(R) \subset J(\mathbb{Q})_{\tors}$.
We find that $J(\mathbb{Q})[3]$ is generated by the point represented by the divisor
  $(0:-1:1) - (1:1:0)$, where the points are viewed inside the projective
plane with weights $1,4,1$.
\end{ex}

\begin{ex}\label{newrationalpoint}
The following example was suggested by Andrew Sutherland. Let
$X$ be the hyperelliptic curve over $\mathbb{Q}$ defined by 
\[
y^2 = 5x^8 - 14x^7 + 33x^6 - 36x^5 + 30x^4 + 2x^3 - 16x^2 + 20x - 7.
\]

  The curve $X$ has no small rational points,  
so this example illustrates how
  we can compute $J(\mathbb{Q})_{\tors}$ without an implementation of
the group law in $J(\mathbb{Q})$.
Computing the order of $\#J(\F_p)$ for some small primes of good reduction, we obtain that
  $\#J(\Q)_{\tors}\mid 13$,
 but no rational point 13-torsion point on $J$ is found
  easily.

For our algorithm, we pick the prime
of good reduction $p = 3$, resulting in the curve 
\[
  \tilde{X}\colon \tilde{y}^2 = 2\tilde{x}^8 + \tilde{x}^7 + 2\tilde{x}^3 + 2\tilde{x}^2 + 2\tilde{x} + 2
\]
over $\F_3$,
which is isomorphic over $\F_3$ to
\[
  \tilde{X}'\colon \tilde{y}^2 = \tilde{x}^8 + \tilde{x}^7 + \tilde{x}^6 + 2\tilde{x}^3 +
  \tilde{x}^2 + 2\,.
\]
  Since $\tilde{X}'$ has rational points at infinity, arithmetic in
  $\tilde{J}'(\F_3)$ is implemented, see the discussion in~\S\ref{g3localglobal}. As in
  Remark~\ref{R:phiK} we use the induced change of coordinates on the Kummer varieties of
  $\tilde{J}$ and $\tilde{J}'$ to check whether a candidate point $\kappa(\tilde{Q})\in
  \tilde{J}(\F_3)[13]$ lifts to $J(\Q)_{\tors}$.
We indeed find the point 
\[
  R = (0 : 1 : -1 : 1 : 0 : -1 : 1 : 20) \in \kappa(J[13])\cap K(\Q)
\] and we can show that 
  $\kappa^{-1}(R) \subset J(\mathbb{Q})$.
  Therefore we have $J(\Q)_{\tors}\cong \Z/13\Z$.

  Since the first coordinate of $R$ is~0, the preimages $Q\in J(\Q)$ of  $R$ are of degree~2
  and hence can be described uniquely using a divisor $D_Q-D_\infty$. 
  A short calculation using the explicit formulas in~\S\ref{g3KtoJ} shows that one of the
  points $Q$ has 
  $$
D_Q = (1 + \zeta_3 , 1 + \zeta_3 ) + (1 + \zeta_3^2 , 1 + \zeta_3^2 ),
  $$ where $\zeta_3$ is a primitive third root of unity.

Alternatively, one can search for points of
  bounded height on $J(\Q)$  reducing to $\tilde{\kappa}(\tilde{Q})$ using a lattice-based
  approach as in~\cite[\S11]{StollG3}. This 
  also finds a rational point of order~13. 
\end{ex}

\begin{ex}\label{kronbergex}
According to~\cite[Example 3.9]{kronbergPhD},
the curve $X$ defined by 
{\tiny
\[
y^2 = \frac{46656}{3125}x^7 + \frac{407097961}{39062500}x^6 + \frac{281238453}{3906250}x^5 - 
\frac{22959453}{312500}x^4 - \frac{2767361}{15625}x^3 + \frac{381951}{2500}x^2 +
  \frac{3093}{6250}x + \frac{1}{2500}
\]}

has a torsion point of order $41$.
  It is easy to see that~41 is an upper bound for $\#J(\Q)_{\tors}$.
  We run our algorithm on the curve with equation $y^2=f(x)$, where
  {\tiny
  \[
f=583200000x^7 + 40709761x^6 + 2812384530x^5 - 2869931625x^4 -
  6918402500x^3 + 5967984375x^2 + 19331250x + 15625\,.
\]}

  The height difference bound $\beta$ computed using Stoll's code
  satisfies 
  $\beta \approx 97$, hence we need $N \log(p) \geq 11\log(2) + 194$ in Step~\eqref{liftN}
  of Algorithm \ref{lifttorsionpoints}. We pick $p = 7$; this yields
the required $p$-adic precision $O(p^N)$ where $N = 128$, which is reached
  in just $7$ steps in Step~\eqref{liftN}. It turns out that we need not go that far; 
$N = 32$ suffices to find a 
  lift $R \in K(\mathbb{Q})\cap \kappa(J[41])$.
  After showing that $R=\kappa(Q)$ for some $Q\in J(\Q)$, we see that 
  $J(\mathbb{Q})_{\tors}\cong \Z/41\Z$, confirming~\cite[Example 3.9]{kronbergPhD}. 
 An explicit generator is
  represented by 
  $  (0 , 125 ) - (\infty)$.
We checked that the Jacobian is in fact 
geometrically simple using the results of 
  \cite[\textsection 3]{HoweZhu}; this was also done by Nicholls
  using~\cite[Proposition~2.4.2]{JacobianDescent}.
\end{ex}

\subsection{Sutherland's database}\label{subsec:suth}
Using the techniques of~\cite{BSSVY16},
Andrew Sutherland has assembled a file with 67879 genus 3 hyperelliptic curves 
of small discriminant at \cite{sutherlanddatabase}.
We used our implementation to compute the rational torsion subgroups of their
Jacobians. For the complete
database containing the results for the 67879 curves, we refer to the file 
\texttt{database.txt} in \url{https://github.com/bernoreitsma/g3hyptorsion}. 
All torsion structures
and the frequency of their appearance can be found in 
Table~\ref{results_suth}. 
Column \textit{inv factors} contains the invariant factors,~\textit{ord} the order of the
group, \textit{count} is the number of times we found this torsion structure and
\emph{gs?}\ indicates whether we found at least one curve whose Jacobian has this torsion
structure and is geometrically simple.
\begin{table}[!h]
\begin{tabular}{|l|l|l|l|}
\hline
  inv factors & ord & count & gs? \\ \hline
  1 & 1& $38370$ &yes\\ \hline
  2 &  2&17093 &yes \\ \hline
  3 & 3& 956 & yes\\ \hline
  2, 2 & 4& {2483} &yes \\ \hline
  4 & 4& 2673 &yes \\ \hline
  5 & 5& 616 &yes \\ \hline
  6 & 6& 1332 &yes \\ \hline
  7 & 7& 701 &yes \\ \hline
  2, 2, 2 & 8& 163 &yes \\ \hline
  2, 4 & 8&493 &yes \\ \hline
  8 & 8&639 &yes \\ \hline
  9 & 9&175 &yes \\ \hline
  10 & 10& 493 &yes \\ \hline
  11 & 11& 34 &yes \\ \hline
  2, 6 & 12& 161 &yes \\ \hline
  12 &  12&403 &yes \\ \hline
  13 &  13&22 &yes \\ \hline
  14 &14 &307 &yes \\ \hline
  15 & 15 &5 &yes \\ \hline
  2, 2, 2, 2 & 16& 3 &yes \\ \hline
  2, 2, 4 & 16& 47 &yes \\ \hline
  2, 8 & 16& 156 &yes \\ \hline
  4, 4 & 16& 3 &no \\ \hline
  16 & 16& 57 &yes \\ \hline
\end{tabular}
\quad
\begin{tabular}{|l|l|l|l|}
\hline
  inv factors & ord & count & gs? \\ \hline
  17 & 17& 5 & yes\\ \hline
  3, 6 &18& 1 &yes \\ \hline
  18 &18&30 &yes \\ \hline
  19 & 19 & 3 &yes \\ \hline
  2, 10 &20 & 88 &yes \\ \hline
  20 &20 & 33 &yes \\ \hline
  21&21 & 2 &yes \\ \hline
  22 & 22 & 14 &yes \\ \hline
  2, 2, 6 & 24 & 19 &yes \\ \hline
  2, 12 & 24 & 98 &yes \\ \hline
  24 & 24 & 21 &yes \\ \hline
  25 & 25 & 4 &yes \\ \hline
  26 & 26 & 9 &yes \\ \hline
  27 & 27 & 3 &yes \\ \hline
  2, 14 & 28 & 33 &yes \\ \hline
  28 & 28 & 17 &yes \\ \hline
  30 & 30 & 6 &yes \\ \hline
  2, 2, 2, 4 & 32 & 1 &yes \\ \hline
  2, 2, 8 & 32 &21 &yes \\ \hline
  2, 16 & 32 & 10 &yes \\ \hline
  32 & 32 & 3 &yes \\ \hline
  2, 18 & 36& 2 &yes \\ \hline
  3, 12 & 36& 1 &no \\ \hline
  6, 6 & 36& 2 &no \\ \hline
\end{tabular}
\quad
\begin{tabular}{|l|l|l|l|}
\hline
  inv factors &ord & count & gs? \\ \hline
  36 & 36 & 3 &yes \\ \hline
  37 & 37 & 1 &yes \\ \hline
  38 & 38 & 2 &yes \\ \hline
  2, 2, 10 & 40& 16 &yes \\ \hline
  2, 20 & 40 & 7 &yes \\ \hline
  40 & 40 & 1 &no \\ \hline
  42 & 42 & 6 & yes \\ \hline
  2, 22 & 44 &1 &  yes\\ \hline
  44 & 44 & 1 &  yes\\ \hline
  46 & 46 & 1 &  yes\\ \hline
  2, 2, 12 & 48 & 2 &no\\ \hline
  2, 24 & 48 & 7 &yes\\ \hline
  4, 12 & 48& 2 &no\\ \hline
  49 & 49 & 2 &yes \\ \hline
  5, 10 & 50&1 &no \\ \hline
  52 & 52 & 2 & yes \\ \hline
  2, 2, 14 & 56 & 1 &yes \\ \hline
  2, 28 & 56 & 4 & yes\\ \hline
  2, 30 & 60 & 1 &no\\ \hline
  60 & 60 & 3 &no\\ \hline
  2, 2, 2, 8 & 64 & 1 & yes \\ \hline
  2, 6, 6 & 72 & 1 &no \\ \hline
  2, 52 & 104 & 1 &no \\ \hline
     &&&\\\hline
\end{tabular}

\caption{Torsion structures found in the database~\cite{sutherlanddatabase}}\label{results_suth}
\end{table}

Here we summarise some of our findings.
\begin{itemize}
	\item $38370$ Jacobians $(\approx 56.5\%)$  have trivial rational torsion subgroup.
	\item $5663$ Jacobians ($\approx 8.3\%$) have a rational torsion point of odd
          order.
	\item $25679$ Jacobians ($\approx 37.8\%$) have a nontrivial cyclic rational 
			torsion subgroup, hence $3830$ ($\approx 5.6\%$) have $2$ or more generators.
	\item Of the non-cyclic torsion subgroups found, $3555$ have $2$
          generators, $370$ have $3$ generators,
			and $5$ torsion subgroups have $4$ generators. The $5$ curves 
			that have four generators all have at least $3$ of these generators of order $2$. 
	\item $11$ Jacobians have a torsion
			subgroup such that there are two invariant factors that are not equal to $2$. 
                      \item 
                      For  $65938$ $(\approx 97.1\%)$ of the Jacobians,
                        the order of the rational torsion subgroup is
                        equal to the upper bound $b$ obtained by reducing
                        modulo all good primes below~1000 as in
                        Example~\ref{extors3}. For the others,
                        we have the following, where \textit{count} denotes
                        the number of occurrences.
Most of the Jacobians for which the quotient $b/\#J(\Q)_{\mathrm{tors}}$ is
    not 1 are geometrically split, for instance, all Jacobians
    for which the quotient is $>7$ or equal to~6, and 182 out of the 192 Jacobians
    with quotient equal to~4.
    The three Jacobians for which the quotient is~7 are geometrically
    irreducible; they have upper bound~7 and $\#J(\Q)_{\mathrm{tors}}=1$.
                        \bigskip

    \begin{tabular}{|l|l|l|l|l|l|l|l|l|l|l|}
  \hline
  $b/\#J(\Q)_{\mathrm{tors}}$& 2& 3& 4&  5& 6&  7&  8&   
  10& 16& 32\\
  \hline
  count &
  1644& 56& 192& 2& 8& 3& 25& 1&  9& 1\\
  \hline
\end{tabular}
\end{itemize}
\medskip

%%%%%%%%%%%%%%%%%%%%%%%%%%%%%%%%%%%%%%%%%%%%%%%%%%%%%%%%%%%%%%%%%%%%%%%%
\subsection{Large orders}\label{subsec:large}
%%%%%%%%%%%%%%%%%%%%%%%%%%%%%%%%%%%%%%%%%%%%%%%%%%%%%%%%%%%%%%%%%%%%%%%%
\subsubsection{Previous work}\label{subsec:}
In \cite[Table 3.2]{JacobianDescent}, Nicholls lists all known orders of rational
torsion points on 
Jacobians of hyperelliptic curves 
of genus $3$. Most of these were constructed by him in suitable families; in particular, he constructs
geometrically simple Jacobians $J/\Q$ with a point $P\in J(\Q)$ 
of order $N$ for every $N\in \{25,\ldots, 44\}$. Moreover, he constructs such points for 
\[
  N \in \{15, 22, 48, 49, 50, 52, 54, 56, 64, 65, 72, 91\}\,.
\]
In particular, the Jacobians of the curves
  \begin{align}\label{nichols431} 
    y^2 &= -16x^7 + 409/4x^6 - 275x^5 + 399x^4 - 334x^3 + 160x^2 - 40x +
    4\\
    y^2 &= -16x^7 + 393/4x^6 - 237x^5 + 309x^4 - 242x^3 + 116x^2 - 32x+
    4\label{nichols432}
  \end{align}
  have a rational point of order 43. This is the largest known prime order for a rational
  point on the Jacobian of a hyperelliptic curve
of genus $3$ (the previous record holder was the curve in Example~\ref{kronbergex}). 
The largest known point order is~91, but Nicholls does not give the equation of the curve.

\begin{rk}\label{R:}
We focused on geometrically simple Jacobians. In~\cite[\S4.3--\S4.6]{SplitJacobians}, Howe, Leprevost and Poonen construct split Jacobians of
hyperelliptic curves of genus~3 with  large torsion orders. They find the groups with the
following invariant factors:
$$
  [2,30], [10,10], [2,8,8],
    [2,2,2,24], [2,2,2,4,8], [2,2,6,12],
       [4,4,8],
        [2,2,2,4,8], [2,2,2,2,4,8]
        $$
\end{rk}

%%%%%%%%%%%%%%%%%%%%%%%%%%%%%%%%%%%%%%%%%%%%%%%%%%%%%%%%%%%%%%%%%%%%%%%%
\subsubsection{Searching for large orders}\label{subsec:search}
Howe~\cite{How15} searched among genus~2 curves of the form
\begin{equation}\label{How}
y^2 +h(x)y=g(x)
\end{equation}
with $\deg(h)=3$ and $\deg(g)=2$ and small coefficients to find large torsion orders. Such curves are promising,
because every curve of genus~2 with a rational non-Weierstrass point has a model of the
form~\eqref{How}.

Similarly, we naively searched among those genus~3 curves that have a model
\begin{equation*}
y^2 +h(x)y=g(x)
\end{equation*}
with $\deg(h)=4$, $\deg(g) =3$ and coefficients bounded in absolute value by~8.
See the file \texttt{searchresults.m} at
\url{https://github.com/bernoreitsma/g3hyptorsion}.

We found the following~3 pairwise non-isomorphic curves having $\#J(\Q)_{\tors}=43$:
\begin{align*}
  y^2&=    x^8 + 4x^6 + 12x^5 - 4x^4 + 24x^3 + 20x^2 - 16x + 16\\
  y^2&=   x^8 - 4x^7 + 10x^5 + 4x^4 - 20x^3 + x^2 + 12x + 4\\
  y^2&=    x^8 - 4x^7 + 18x^5 - 16x^4 - 12x^3 + 9x^2 + 8
  \end{align*}
  The third curve is isomorphic to the curve~\eqref{nichols431} found by
  Nicholls. We did
  not recover the example~\eqref{nichols432} and we found no larger prime order.
All three Jacobians are geometrically simple. 

The largest order $\#J(\Q)_{\tors}$ that we found was~160; this occurred exactly once, for
the following curve, whose Jacobian is geometrically simple: 
$$
y^2 =  9x^8 - 48x^7 + 46x^6 + 96x^5 - 119x^4 - 72x^3 + 64x^2 + 24x\,.
$$
This is the largest torsion order on a geometrically simple Jacobian of dimension~3 found
so far.

The largest (finite) order of an element of $J(\Q)$
was on the Jacobian $J$ of the curve defined by
$$y^2 = 9x^8 - 36x^7 + 36x^6 + 18x^5 - 48x^4 + 24x^3 + x^2 - 4x + 4\,.$$
We have $J(\Q)_{\tors}\cong \Z/144\Z$. Here $J$ is not geometrically simple. 
The largest (finite)  order of a rational point on a geometrically simple
Jacobian  occurred for the curve
\[
  y^2=      x^8 - 2x^7 + 7x^6 - 6x^5 - x^4 + 10x^3 - 6x^2 + 1
\]
whose Jacobian has $J(\Q)_{\tors}\cong \Z/91\Z$, generated by the point
$[2(1,2)-D_{\infty}]$.

Table~\ref{results_search} contains all group structures found in the search which do not already appear for a
geometrically simple Jacobian of a curve in Sutherland's database.
\begin{table}[H]
\begin{tabular}{|l|l|l|}
\hline
  inv factors &ord &  gs? \\ \hline
\hline
      3,3 & 9 &yes \\\hline
      4,4 & 16 &yes \\\hline
      23 & 23 &yes \\\hline
     5, 5 & 25 & yes \\\hline
     29 & 29& yes\\\hline
     31 & 31& yes\\\hline
     2, 4, 4 & 32 &no \\\hline
     4, 8 & 32& yes \\\hline
     35 & 35 & yes \\\hline
     3, 12 & 36 & yes \\\hline
     6, 6 & 36 &no \\\hline
     39 &39 &yes \\\hline
     40 &40 & yes \\\hline
     41 & 41 & yes \\\hline
     43 & 43& yes\\\hline
     2, 2, 12 & 48 &yes \\\hline
     4, 12 & 48& yes\\\hline
     48 & 48& yes\\\hline
    5, 10 & 50 & no \\\hline
     50 & 50& yes\\\hline
     51 & 51 & yes \\\hline
 \end{tabular}
\quad
\begin{tabular}{|l|l|l|l|}\hline
  inv factors &ord &  gs? \\ \hline
\hline
     2, 26 & 52 &yes \\\hline
  3, 18 & 54 &yes \\\hline
     54 & 54 &yes \\\hline
     56 &56 &yes \\\hline
     58 &58 &yes \\\hline
     2, 30 &60 &yes \\\hline
     60 &60 &no \\\hline
     63 & 63 &yes \\\hline
     2, 2, 16 &64 &no \\\hline
          2, 4, 8 &64 &yes \\\hline
     2, 32 &64 &yes \\\hline
     4, 16 & 64&no\\\hline
     64 & 64&yes \\\hline
     65 & 65&yes \\\hline
     70 & 70&yes \\\hline
     2, 2, 18 &72 &no\\\hline
     2, 6, 6 &72 &no \\\hline
     2, 36 &72 &no \\\hline
     6, 12 &72 &yes \\\hline
     72 & 72& yes\\\hline
     2, 2, 2, 10 & 80&yes \\\hline
\end{tabular}
\quad
\begin{tabular}{|l|l|l|l|}\hline
  inv factors &ord &  gs? \\ \hline
\hline
     2, 2, 20 &80 &yes \\\hline
     2, 40 & 80& no\\\hline
     4, 20 & 80&no \\\hline
     80 &80 &no \\\hline
     2, 42 &84 &yes \\\hline
     2, 44 &88 &yes \\\hline
     91 & 91& yes\\\hline
     2, 2, 24 &96 &no \\\hline
     2, 4, 12 &96 &no\\\hline
     2, 48 &96 &no\\\hline
     4, 24 & 96&no \\\hline
     2, 2, 28 &104 &yes \\\hline
     2, 52 &104 &yes \\\hline
     2, 60 & 120 &no \\\hline
     2, 2, 2, 2, 8 & 128 &no \\\hline
     2, 2, 2, 16 & 128&no \\\hline
     2, 4, 16 &128 &no \\\hline
     12, 12 &144 &no \\\hline
     144 &144&no\\\hline
     2, 2, 2, 2, 10 & 160 & yes\\\hline
     &&\\\hline
\end{tabular}
\caption{Torsion structures found in the search}\label{results_search}
\end{table}
\begin{rk}\label{R:new_orders}
  In our computations, we found all point orders in Nicholls' \cite[Table
  3.2]{JacobianDescent}. Moreover, the following
  orders appeared for geometrically simple Jacobians, but were not previously described in
  the literature for
  such Jacobians: 
  \begin{equation*}
23,24, 46, 51, 58, 63, 70
  \end{equation*}
In addition, we found every order up to~22.
  We also found the following new orders for split Jacobians:
  \begin{equation*}
    60,80,144
  \end{equation*}
  The corresponding curves all have automorphism group of order greater
  than~2, so their Jacobians are split over $\Q$. 
\end{rk}
%%%%%%%%%%%%%%%%%%%%%%%%%%%%%%%%%%%%%%%%%%%%%%%%%%%%%%%%%%%%%%%%%%%%%%%%
\subsection{Additional examples and proof of Theorem~\ref{T:main}}\label{subsec:proof}
All torsion structures in Theorem~\ref{T:main} occurred in  the
computations discussed
in~\S\ref{subsec:suth} and~\S\ref{subsec:search} (see Table~\ref{results_suth} and
Table~\ref{results_search}), {except for $(\Z/2\Z)^5, (\Z/2\Z)^6, (\Z/2\Z)^4\times \Z/4\Z$
and $(\Z/2\Z)^3\times \Z/6\Z$.
  It is easy to find geometrically simple Jacobians with
    rational torsion subgroup isomorphic to the first two using ~\S\ref{g3twotorsion}. 
    For instance, the curves 
\[
  X_1\colon y^2 = x(x-1)(x-2)(x-3)(x-4)(x^2+x+1)
\] 
and
\[
X_2\colon y^2 = x(x-1)(x-2)(x-3)(x+1)(x+2)(x+3)
\] 
have geometrically simple Jacobian with rational torsion subgroup isomorphic to $(\Z/2\Z)^5$ and
$(\Z/2\Z)^6$, respectively.
In a systematic search, we also found the curves
\[
X_3\colon y^2 = 
 x^7 - 8x^6 - 19x^5 + 235x^4 - 130x^3 - 875x^2 - 500x
\] 
and
\[
X_4\colon y^2 = 
 x^7 - 15x^6 + 87x^5 - 244x^4 + 335x^3 - 191x^2 + 9x + 18
\] 
whose Jacobians $J_3$ and $J_4$ are geometrically simple. We have
$J_3(\Q)_{\tors}\cong (\Z/2\Z)^4\times \Z/4\Z$ and  
$J_4(\Q)_{\tors}\cong (\Z/2\Z)^3\times \Z/6\Z$.
%The code for these is contained in \texttt{additional.m} at \url{https://github.com/bernoreitsma/g3hyptorsion}. 
This completes the proof of Theorem~\ref{T:main}.
}

%=========================================================================
\bibliographystyle{alpha}
\bibliography{Bibliography}

\end{document}